\newtheorem{thm}{Theorem}[section]
\newtheorem{lem}[thm]{Lemma}
\newtheorem{prop}[thm]{Proposition}
\theoremstyle{definition}
\newtheorem{defn}[thm]{Definition}
\newtheorem{example}[thm]{Example}
\theoremstyle{remark}
\numberwithin{equation}{section}
\begin{document}
\title[Generalized Almost Automorphic Functions]{Almost Automorphic and Asymptotically Almost Automorphic Type Functions in Lebesgue Spaces with Variable Exponents
$L^{p(x)}$}

\author{Toka Diagana}
\address{Department of Mathematical Sciences, University of Alabama in Huntsville, 301  Sparkman Drive, Huntsville, 
AL 35899, USA}
\email{toka.diagana@uah.edu}

\author{Marko Kosti\' c}
\address{Faculty of Technical Sciences,
University of Novi Sad,
Trg D. Obradovi\' ca 6, 21125 Novi Sad, Serbia}
\email{marco.s@verat.net}

{\renewcommand{\thefootnote}{} \footnote{\ 2010 {\it Mathematics
Subject Classification.} 34C27, 35B15, 46E30.
\\ \text{  }  \ \    {\it Key words and phrases.} Lebesgue spaces with variable exponents,
Stepanov almost automorphy with variable exponents,  asymptotical Stepanov almost automorphy with variable exponents, abstract Volterra integro-differential inclusions, abstract (multi-term) fractional differential equations.
\\  \text{  }  \ \ The second named author is partially supported by grant 174024 of Ministry
of Science and Technological Development, Republic of Serbia.}}

\begin{abstract}
The paper introduces and studies the class of (asymptotically) Stepanov almost automorphic functions with variable exponents. 
Any function belonging this class needs to be (asymptotically) Stepanov almost automorphic.
A few relevant applications to abstract Volterra integro-differential inclusions in Banach spaces is presented.
\end{abstract}
\maketitle

\section{Introduction and Preliminaries}\label{intro1}

The main aim of this paper is to continue our recent research of Stepanov $p(x)$-almost periodicity and asymptotical Stepanov $p(x)$-almost periodicity raised in \cite{toka-marek}, as well as to initiate the study of generalized almost automorphy and generalized asymptotical almost automorphy that intermediate the classical and Stepanov concept. This is done here by examining the notion of Stepanov $p(x)$-almost automorphy and asymptotical Stepanov $p(x)$-almost automorphy.  We basically follow the approach obeyed in \cite{toka-marek}, which enables us to conclude that the introduced classes of functions are translation invariant (Stepanov-like pseudo-almost automorphic functions with variable exponents, which have been analyzed in \cite{m-zitane-prim}, do not possess this property).

We investigate generalized almost automorphic and generalized asymptotically almost automorphic type functions in Banach spaces by means of results from the theory of Lebesgue spaces with variable exponents
$L^{p(x)}.$ For a given measurable function $p : [0,1] \rightarrow [1,\infty],$ we define the notions of an $S^{p(x)}$-almost automorphic function and an asymptotically $S^{p(x)}$-almost automorphic function. In the case that $p(x)\equiv p \geq 1,$ the introduced notion is equivalent to the usually considered notion of $S^{p}$-almost automorphy and asymptotical $S^{p}$-almost automorphy. 

The organization and main ideas of this paper are briefly described as follows. In Subsection 1.1, Subsection 1.2 and Subsection 1.3, we collect the basic facts about fractional calculus, multivalued linear operators
and Lebesgue spaces with variable exponents
$L^{p(x)},$ respectively. Section \ref{section2}
is devoted to the recapitulation of some basic definitions and results about generalized almost periodic and generalized almost automorphic functions. We start Section \ref{section3} by recalling the definitions of Stepanov $p(x)$-boundedness and Stepanov $p(x)$-almost periodicity in the sense of \cite{toka-marek}. The notion of (asymptotical) Stepanov $p(x)$-almost automorphy is introduced in Definition \ref{sasasa} (Definition \ref{gaston-toka}). It is expected that the notion of (asymptotical) Stepanov $p(x)$-almost automorphy is much more general than that of (asymptotical) Stepanov $p(x)$-almost periodicity, and we explictly show this in Proposition \ref{oze} and Proposition \ref{ozeze}. Several continuous embeddings between various Stepanov $p(x)$-almost automorphic spaces are proved in Theorem \ref{toka-maremare}, where it is particularly shown that an $S^{p(x)}$-almost automorphic function has to be Stepanov $1$-almost automorphic. 

We know that 
any almost periodic function has to be $S^{p(x)}$-almost periodic for any measurable function $p : [0,1] \rightarrow [1,\infty].$ This is no longer true for almost automorphy, where we perceive some peculiar differences between almost automorphy and compact almost automorphy, proving that the almost automorphy of a function 
$f : {\mathbb R} \rightarrow X$ implies its $S^{p(x)}$-almost automorphy only if we impose the validity of some additional conditions (see Proposition \ref{propa}); all these statements have natural reformulations for asymptotical $S^{p(x)}$-almost automorphy.

In Section 4, we introduce (asymptotically) Stepanov $p(x)$-almost automorphic functions depending on two parameters and formulate a great number of related composition principles, providing thus slight extensions of results obtained in \cite{ding-xll}, \cite{fan-et-ali} and \cite{nova-mono}. Keeping this in mind, it is very technical to reword several known results concerning semilinear analogues of the inclusions \eqref{decko-leftt}-\eqref{decko-left} and (DFP)$_{f,\gamma}$ considered below (see e.g. \cite[Theorem 4-Theorem 8; Theorem 10]{element} for more details in this direction). Because of that, in this paper, we will not consider semilinear Cauchy inclusions.

Concerning applications, our main results are given in Section \ref{raske}, where we analyze the invariance of generalized (asymptotical) almost automorphy in Lebesgue spaces with variable exponents
$L^{p(x)}$ under the actions of convolution products (see Proposition \ref{ravi-and-variable} and Proposition \ref{stepanov-almost-automorphy-p(x)}). Although strengthens some previous results of ours in this direction, we feel duty bound to say that it is very difficult to apply Proposition \ref{ravi-and-variable} in the case that $p(x)$ is not a  constant function. This is no longer case with the assertion of 
Proposition \ref{stepanov-almost-automorphy-p(x)}, where the use of 
ergodic Stepanov components with variable exponents plays a crucial role (see also Example \ref{gotyie} below).
In addition to the above, we propose several open problems, questions, illustrative examples and applications of our abstract results. 

We use the standard notation throughout the paper.
We assume
that $(X,\| \cdot \|)$ is a complex Banach space. If $Y$ is also such a space, then we denote by
$L(X,Y)$ the space of all continuous linear mappings from $X$ into
$Y;$ $L(X)\equiv L(X,X).$ Assuming $A$ is a closed linear operator
acting on $X,$
then the domain, kernel space and range of $A$ will be denoted by
$D(A),$ $N(A)$ and $R(A),$
respectively. 

Let $I={\mathbb R}$ or $I=[0,\infty).$ By $C_{b}(I: X)$ we denote the Banach space consisting of all bounded continuous functions
$I\mapsto X,$ equipped with the sup-norm.
The Gamma function is denoted by
$\Gamma(\cdot)$ and the principal branch is always used to take
the powers; the convolution like
mapping $\ast$ is given by $f\ast g(t):=\int_{0}^{t}f(t-s)g(s)\,
ds .$ Set $g_{\zeta}(t):=t^{\zeta-1}/\Gamma(\zeta),$ $\zeta>0.$ For any $s\in {\mathbb R},$ we define $\lfloor s \rfloor :=\sup \{
l\in {\mathbb Z} : s\geq l \}$ and $\lceil s \rceil :=\inf \{ l\in
{\mathbb Z} : s\leq l \}.$

\subsection{Fractional Calculus}\label{na-smert}

The first conference on fractional calculus and fractional differential equations was held in New Haven (1974). Since then, fractional calculus has gained more
and more attention due to its wide applications in various fields of science, such as mathematical physics,
engineering, biology, aerodynamics, chemistry, economics etc. Fairly complete information about fractional calculus and fractional
differential equations can be obtained
by consulting
\cite{bajlekova}, \cite{Diet}, \cite{kilbas}, \cite{knjigaho} and references cited therein.

In this subsection, we will briefly explain the types of fractional derivatives which will be used in the paper. Essentially, we use only the Caputo fractional derivatives and Weyl-Liouville fractional derivatives of order
$\gamma \in (0,1].$ They are defined as follows. 

Let $\gamma \in (0,1).$ 
Then
the Caputo fractional derivative\index{fractional derivatives!Caputo}
${\mathbf D}_{t}^{\gamma}u(t)$ is defined for those functions
$u :[0,\infty) \rightarrow X$ satisfying that, for every $T>0,$ we have $u _{| (0,T]}(\cdot) \in C((0,T]: X),$ $u(\cdot)-u(0) \in L^{1}((0,T) : X)$
and $g_{1-\gamma}\ast (u(\cdot)-u(0)) \in W^{1,1}((0,T) : X),$
by
$$
{\mathbf
D}_{t}^{\gamma}u(t)=\frac{d}{dt}\Biggl[g_{1-\gamma}
\ast \Bigl(u(\cdot)-u(0)\Bigr)\Biggr](t),\quad t\in (0,T];
$$
see \cite[p. 7]{bajlekova} for the notion of Sobolev space $W^{1,1}((0,T) : X).$
The Weyl-Liouville fractional derivative
$D_{t,+}^{\gamma}u(t)$ of order $\gamma$ is defined for those continuous functions
$u : {\mathbb R} \rightarrow X$
such that $t\mapsto \int_{-\infty}^{t}g_{1-\gamma}(t-s)u(s)\, ds,$ $t\in {\mathbb R}$ is a well-defined continuously differentiable mapping, by
$$
D_{t,+}^{\gamma}u(t):=\frac{d}{dt}\int_{-\infty}^{t}g_{1-\gamma}(t-s)u(s)\, ds,\quad t\in {\mathbb R}.
$$
Set $
{\mathbf D}_{t}^{1}u(t):=(d/dt)u(t)$ and $
D_{t,+}^{1}u(t):=-(d/dt)u(t).$

\subsection{Multivalued linear operators}\label{na-smrt}

We need some basic definitions and results about multivalued linear operators in Banach spaces (see \cite{toka-marek} for more details in this direction). Suppose that $X$ and $Y$ are two Banach spaces.
A multivalued map (multimap) ${\mathcal A} : X \rightarrow P(Y)$ is said to be a multivalued
linear operator, MLO for short, iff the following holds:
\begin{itemize}
\item[(i)] $D({\mathcal A}) := \{x \in X : {\mathcal A}x \neq \emptyset\}$ is a linear subspace of $X$;
\item[(ii)] ${\mathcal A}x +{\mathcal A}y \subseteq {\mathcal A}(x + y),$ $x,\ y \in D({\mathcal A})$
and $\lambda {\mathcal A}x \subseteq {\mathcal A}(\lambda x),$ $\lambda \in {\mathbb C},$ $x \in D({\mathcal A}).$
\end{itemize}
In the case that $X=Y,$ then we say that ${\mathcal A}$ is an MLO in $X.$
It is well known that
for any $x,\ y\in D({\mathcal A})$ and $\lambda,\ \eta \in {\mathbb C}$ with $|\lambda| + |\eta| \neq 0,$ we
have $\lambda {\mathcal A}x + \eta {\mathcal A}y = {\mathcal A}(\lambda x + \eta y).$ If ${\mathcal A}$ is an MLO, then ${\mathcal A}0$ is a linear manifold in $Y$
and ${\mathcal A}x = f + {\mathcal A}0$ for any $x \in D({\mathcal A})$ and $f \in {\mathcal A}x.$ Define 
the range $R({\mathcal A})$ of ${\mathcal A}$ 
by 
$R({\mathcal A}):=\{{\mathcal A}x :  x\in D({\mathcal A})\}.$ 

Let ${\mathcal A}$ be an MLO in $X$. 
Then the resolvent set of ${\mathcal A},$ $\rho({\mathcal A})$ for short, is defined as the union of those complex numbers
$\lambda \in {\mathbb C}$ for which
\begin{itemize}
\item[(i)] $X= R(\lambda-{\mathcal A})$;
\item[(ii)] $(\lambda - {\mathcal A})^{-1}$ is a single-valued linear continuous operator on $X.$
\end{itemize}
The operator $\lambda \mapsto (\lambda -{\mathcal A})^{-1}$ is called the resolvent of ${\mathcal A}$ ($\lambda \in \rho ({\mathcal A})$). Set $R(\lambda : {\mathcal A})\equiv  (\lambda -{\mathcal A})^{-1}$  ($\lambda \in \rho({\mathcal A})$).

Henceforward, we will employ the following condition:

\begin{itemize}
\item[(P)]
There exist finite constants $c,\ M>0$ and $\beta \in (0,1]$ such that
$$
\Psi:=\Bigl\{ \lambda \in {\mathbb C} : \Re \lambda \geq -c\bigl( |\Im \lambda| +1 \bigr) \Bigr\} \subseteq \rho({\mathcal A})
$$
and
$$
\| R(\lambda : {\mathcal A})\| \leq M\bigl( 1+|\lambda|\bigr)^{-\beta},\quad \lambda \in \Psi.
$$
\end{itemize}

\subsection{Lebesgue spaces with variable exponents
$L^{p(x)}$}\label{karambita}

Assume $\emptyset \neq \Omega \subseteq {\mathbb R}.$
By $M(\Omega  : X)$ we denote the collection of all measurable functions $f: \Omega \rightarrow X;$ the symbol $M(\Omega)$ stands for the collection of all functions $f\in M(\Omega : {\mathbb C})$ such that $f(x)\in {\mathbb R}$ for all $x\in \Omega.$ Furthermore, ${\mathcal P}(\Omega)$ denotes the vector space of all Lebesgue measurable functions $p : \Omega \rightarrow [1,\infty].$
For any $p\in {\mathcal P}(\Omega)$ and $f\in M(\Omega : X),$ set
$$
\varphi_{p(x)}(t):=\left\{
\begin{array}{l}
t^{p(x)},\ t\geq 0,\ 1\leq p(x)<\infty,\\
0,\ 0\leq t\leq 1,\ p(x)=\infty,\\
\infty,\ t>1,\ p(x)=\infty 
\end{array}
\right.
$$
and
$$
\rho(f):=\int_{\Omega}\varphi_{p(x)}(\|f(x)\|)\, dx .
$$
We define Lebesgue space 
$L^{p(x)}(\Omega : X)$ with variable exponent
as follows
$$
L^{p(x)}(\Omega : X):=\Bigl\{f\in M(\Omega : X): \lim_{\lambda \rightarrow 0+}\rho(\lambda f)=0\Bigr\}.
$$
Then 
$$
L^{p(x)}(\Omega : X)=\Bigl\{f\in M(\Omega : X):  \mbox{ there exists }\lambda>0\mbox{ such that }\rho(\lambda f)<\infty\Bigr\};
$$
see \cite[p. 73]{variable}.
For every $u\in L^{p(x)}(\Omega : X),$ we introduce the Luxemburg norm of $u(\cdot)$ in the following manner
$$
\|u\|_{p(x)}:=\|u\|_{L^{p(x)}(\Omega :X)}:=\inf\Bigl\{ \lambda>0 : \rho(f/\lambda)    \leq 1\Bigr\}.
$$
Equipped with the above norm, the space $
L^{p(x)}(\Omega : X)$ becomes a Banach one (see e.g. \cite[Theorem 3.2.7]{variable} for scalar-valued case), coinciding with the usual Lebesgue space $L^{p}(\Omega : X)$ in the case that $p(x)=p\geq 1$ is a constant function.
For any $p\in M(\Omega),$ we set 
$$
p^{-}:=\text{essinf}_{x\in \Omega}p(x) \ \ \mbox{ and } \ \ p^{+}:=\text{esssup}_{x\in \Omega}p(x).
$$
Define
$$
C_{+}(\Omega ):=\bigl\{ p\in M(\Omega): 1<p^{-}\leq p(x) \leq p^{+} <\infty \mbox{ for a.e. }x\in \Omega \bigr \}
$$
and
$$
D_{+}(\Omega ):=\bigl\{ p\in M(\Omega): 1 \leq p^{-}\leq p(x) \leq p^{+} <\infty \mbox{ for a.e. }x\in \Omega \bigr \}.
$$
Set 
$$
E^{p(x)}(\Omega :X):=\Bigl\{ f\in L^{p(x)}(\Omega :X) : \mbox{ for all }\lambda>0\mbox{ we have }\rho(\lambda f)<\infty\Bigr\};
$$
$E^{p(x)}(\Omega )\equiv E^{p(x)}(\Omega : {\mathbb C}).$ It is well known that 
$E^{p(x)}(\Omega :X)=
L^{p(x)}(\Omega :X),$ provided that $p\in D_{+}(\Omega )$ (see e.g. \cite{fan-zhao}).

We will use the following lemma (see e.g. \cite[Lemma 3.2.20, (3.2.22); Corollary 3.3.4; Lemma 3.2.8(c)]{variable} for scalar-valued case):

\begin{lem}\label{aux}
\begin{itemize}
\item[(i)] Let $p,\ q,\ r \in {\mathcal P}(\Omega),$ and let
$$
\frac{1}{q(x)}=\frac{1}{p(x)}+\frac{1}{r(x)},\quad x\in \Omega .
$$
Then, for every $u\in L^{p(x)}(\Omega : X)$ and $v\in L^{r(x)}(\Omega),$ we have $uv\in L^{q(x)}(\Omega : X)$
and
\begin{align*}
\|uv\|_{q(x)}\leq 2 \|u\|_{p(x)}\|v\|_{r(x)}.
\end{align*}
\item[(ii)] Let $\Omega $ be of a finite Lebesgue's measure, let $p,\ q \in {\mathcal P}(\Omega),$ and let $q\leq p$ a.e. on $\Omega.$ Then
 $L^{p(x)}(\Omega : X)$ is continuously embedded in $L^{q(x)}(\Omega : X).$
\item[(iii)] Let $p\in {\mathcal P}(\Omega),$ and let $f_{k},\ f\in M(\Omega  : X)$ for all $k\in {\mathbb N}.$ If $\lim_{k\rightarrow \infty} f_{k}(x)=f(x)$ for a.e. $x\in \Omega $
and there exists a real valued function $g\in E^{p(x)}(\Omega)$ such that $\|f_{k}(x)\|\leq g(x)$ for a.e. $x\in \Omega,$ then $\lim_{k\rightarrow \infty}\|f_{k}-f\|_{L^{p(x)}(\Omega :X)}=0.$
\end{itemize}
\end{lem}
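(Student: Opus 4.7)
The plan is to reduce each of the three parts to the scalar-valued analogues cited from \cite{variable}. The key observation underpinning every reduction is the identity
$$
\rho(f)=\int_{\Omega}\varphi_{p(x)}\bigl(\|f(x)\|\bigr)\,dx=\rho\bigl(\|f(\cdot)\|\bigr),
$$
valid for any $f\in M(\Omega:X)$, which immediately yields the norm identity
$$
\|f\|_{L^{p(x)}(\Omega:X)}=\bigl\|\,\|f(\cdot)\|\,\bigr\|_{L^{p(x)}(\Omega)}.
$$
Once this is recorded, every assertion about a vector-valued function can be phrased as an assertion about the nonnegative scalar function $x\mapsto\|f(x)\|$, to which the scalar results apply verbatim.

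For (i), I would set $w(x):=u(x)v(x)\in X$ and note that $\|w(x)\|=\|u(x)\|\,|v(x)|$ a.e.\ on $\Omega$. Applying the scalar Hölder-type inequality \cite[Lemma 3.2.20]{variable} to the pair $(\|u(\cdot)\|, v)\in L^{p(x)}(\Omega)\times L^{r(x)}(\Omega)$ gives $\|u(\cdot)\|\,v\in L^{q(x)}(\Omega)$ with norm bounded by $2\|u\|_{p(x)}\|v\|_{r(x)}$; by the identity above this is exactly the stated inequality for $uv$. For (ii), I would invoke the scalar embedding \cite[Corollary 3.3.4]{variable} on $\|u(\cdot)\|$: since $\Omega$ has finite Lebesgue measure and $q\leq p$ a.e., the scalar embedding $L^{p(x)}(\Omega)\hookrightarrow L^{q(x)}(\Omega)$ is continuous with constant depending only on $p,q$ and $|\Omega|$; translating back via the norm identity gives the corresponding vector-valued embedding with the same constant.

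For (iii), I would first observe that $\|f_{k}(x)\|\leq g(x)$ a.e.\ combined with $f_{k}(x)\to f(x)$ a.e.\ and continuity of the norm forces $\|f(x)\|\leq g(x)$ a.e. The scalar functions $h_{k}(x):=\|f_{k}(x)-f(x)\|$ then satisfy $h_{k}\to 0$ a.e.\ and $h_{k}(x)\leq 2g(x)$, where $2g\in E^{p(x)}(\Omega)$. Applying the scalar dominated convergence statement \cite[Lemma 3.2.8(c)]{variable} to $\{h_{k}\}$ yields $\|h_{k}\|_{L^{p(x)}(\Omega)}\to 0$, which by the norm identity is precisely $\|f_{k}-f\|_{L^{p(x)}(\Omega:X)}\to 0$.

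None of the three steps presents a substantive obstacle; the only point requiring care is the passage from the vector-valued modular to the scalar one, and in part (iii) the a.e.\ extension of the domination $\|f(x)\|\leq g(x)$ to the limit. Both are routine once written out, and the cited scalar statements deliver the quantitative content of the lemma.
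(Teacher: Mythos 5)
Your reduction to the scalar case via the identity $\|f\|_{L^{p(x)}(\Omega:X)}=\bigl\|\,\|f(\cdot)\|\,\bigr\|_{L^{p(x)}(\Omega)}$ is correct and is precisely what the paper intends: the lemma is stated there without proof, with only the citation \cite[Lemma 3.2.20, (3.2.22); Corollary 3.3.4; Lemma 3.2.8(c)]{variable} to the scalar-valued statements. Your write-up merely supplies the routine vector-to-scalar passage (including the domination $\|f_k-f\|\leq 2g$ with $2g\in E^{p(x)}(\Omega)$ in part (iii)) that the paper leaves implicit.
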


For more details about Lebesgue spaces with variable exponents
$L^{p(x)},$ the reader may consult \cite{m-zitane}-\cite{m-zitane-prim}, \cite{variable}-\cite{fan-zhao} and \cite{doktor}.

\section{Generalized almost periodic and generalized almost automorphic functions}\label{section2}

Let $1\leq p <\infty,$ and let $f,\ g\in L^{p}_{loc}(I :X),$ where $I={\mathbb R}$ or $I=[0,\infty).$ We define the Stepanov `metric' by\index{Stepanov metric}
\begin{align*}
D_{S}^{p}\bigl[f(\cdot),g(\cdot)\bigr]:= \sup_{x\in I}\Biggl[\int_{x}^{x+1}\bigl \| f(t) -g(t)\bigr\|^{p}\, dt\Biggr]^{1/p}.
\end{align*}
The Stepanov norm of $f(\cdot)$ is introduced by\index{Stepanov norm} \index{Stepanov distance} \index{Weyl!distance} \index{Weyl!norm}
$
\| f \|_{S^{p}}:= D_{S}^{p}[f(\cdot),0].
$
It is said that a function $f\in L^{p}_{loc}(I :X)$ is Stepanov $p$-bounded, $S^{p}$-bounded shortly, iff\index{function!Stepanov bounded}
$$
\|f\|_{S^{p}}:=\sup_{t\in I}\Biggl( \int^{t+1}_{t}\|f(s)\|^{p}\, ds\Biggr)^{1/p}=\sup_{t\in I}\Biggl( \int^{1}_{0}\|f(s+t)\|^{p}\, ds\Biggr)^{1/p}<\infty.
$$
Furnished with the above norm, the space $L_{S}^{p}(I:X)$ consisted of all $S^{p}$-bounded functions is a Banach one. 
We refer the reader to \cite{toka-marek} for the notions of almost periodic functions and Stepanov $p$-almost periodic functions (see also \cite{diagana} and \cite{nova-mono}).  

Let $f : {\mathbb R} \rightarrow X$ be continuous. As it is well known,\index{function!almost automorphic}
$f(\cdot)$ is called almost automorphic, a.a. for short, iff for every real sequence $(b_{n})$ there exist a subsequence $(a_{n})$ of $(b_{n})$ and a map $g : {\mathbb R} \rightarrow X$ such that
\begin{align}\label{first-equ}
\lim_{n\rightarrow \infty}f\bigl( t+a_{n}\bigr)=g(t)\ \mbox{ and } \  \lim_{n\rightarrow \infty}g\bigl( t-a_{n}\bigr)=f(t),
\end{align}
pointwise for $t\in {\mathbb R}.$ If this is the case, $f\in C_{b}({\mathbb R} : X)$ and the limit function $g(\cdot)$ must be bounded on ${\mathbb R}$ but not necessarily continuous on ${\mathbb R}.$ It is said that $f(\cdot)$ is compactly almost automorphic iff
the convergence in \eqref{first-equ} is uniform on compacts of ${\mathbb R}.$
The vector space consisting of all almost automorphic, resp., compactly almost automorphic functions, is denoted by $AA({\mathbb R} :X),$ resp., $AA_{c}({\mathbb R} :X).$ By Bochner's criterion \cite{diagana}, any almost periodic function has to be compactly almost automorphic. 

The space of pseudo-almost automorphic functions, denoted by $PAA({\mathbb R} : X),$ is defined as the direct sum of spaces $AA({\mathbb R} : X)$
and $PAP_{0}({\mathbb R} : X),$ where
$PAP_{0}({\mathbb R} : X)$ denotes the space consisting of all bounded continuous functions $\Phi : {\mathbb R} \rightarrow X$ such
that
$$
\lim_{r\rightarrow \infty} \frac{1}{2r}\int^{r}_{-r}\| \Phi(s)\|\, ds=0.
$$
Equipped with the sup-norm, the space $PAA({\mathbb R} : X)$ is a Banach one.

Following G. M. N'Gu\' er\' ekata and A. Pankov \cite{gaston-apankov}, we say that a function $f\in L_{loc}^{p}({\mathbb R}:X)$ is said to be Stepanov $p$-almost automorphic, $S^{p}$-almost automorphic or $S^{p}$-a.a. shortly, iff for
every real sequence $(a_{n}),$ there exists a subsequence $(a_{n_{k}})$ \index{ Stepanov almost automorphy}
and a function $g\in L_{loc}^{p}({\mathbb R}:X)$ such that
\begin{align}\label{ne-mu-je-stepanov}
\lim_{k\rightarrow \infty}\int^{t+1}_{t}\Bigl \| f\bigl(a_{n_{k}}+s\bigr) -g(s)\Bigr \|^{p} \, ds =0
\end{align}
and
\begin{align}\label{ne-mu-je-stepanovi}
\lim_{k\rightarrow \infty}\int^{t+1}_{t}\Bigl \| g\bigl( s-a_{n_{k}}\bigr) -f(s)\Bigr \|^{p} \, ds =0
\end{align}
for each $ t\in {\mathbb R}$; a function $f\in L_{loc}^{p}([0,\infty):X)$ is said to be asymptotically Stepanov $p$-almost automorphic, asymptotically $S^{p}$-a.a. shortly, iff there exists an $S^{p}$-almost automorphic
function $g(\cdot)$ and a function $q\in  L_{S}^{p}([0,\infty): X)$ such that $f(t)=g(t)+q(t),$ $t\geq 0$ and $\hat{q}\in C_{0}([0,\infty) : L^{p}([0,1]: X));$ any $S^{p}$-almost automorphic function $f(\cdot)$ has to be $S^{p}$-bounded ($1\leq p<\infty$); here and hereafter, $\hat{q}(t):=q(t+\cdot),$ $t\geq 0.$ The vector space consisting of all $S^{p}$-almost automorphic functions, resp., asymptotically $S^{p}$-almost automorphic functions, will be denoted by $ AAS^{p}({\mathbb R} : X),$ resp., $ AAAS^{p}([0,\infty) : X).$

If $1\leq p<q<\infty$ and $f(\cdot)$ is (asymptotically) Stepanov $q$-almost automorphic, then $f(\cdot)$ is (asymptotically) Stepanov $p$-almost automorphic. Therefore, the (asymptotical) Stepanov $p$-almost automorphy of $f(\cdot)$ for some $p\in [1,\infty)$ implies the (asymptotical) Stepanov $1$-almost automorphy of $f(\cdot).$
It is a well-known fact that if $f(\cdot)$ is an almost automorphic (a.a.a.) function
then $f(\cdot)$ is also $S^p$-almost automorphic (asymptotically $S^p$-a.a.) for $1\leq p <\infty.$ The converse statement is false, however.

A function $f(\cdot)$ is said to be (asymptotically) Stepanov almost periodic (automorphic) iff $f(\cdot)$ is (asymptotically) Stepanov $1$-almost periodic (automorphic).

\section{Generalized almost automorphic type functions in  Lebesgue spaces with variable exponents
$L^{p(x)}$}\label{section3}

The following notion of Stepanov $p(x)$-boundedness has been recently introduced in \cite{toka-marek} by using a completely different approach from that one employed in former papers by T. Diagana and M. Zitane (cf. \cite[Definition 3.10]{m-zitane} and \cite[Definition 4.5]{m-zitane-prim}):

\begin{defn}\label{daniel-toka}
Let $p\in {\mathcal P}([0,1]),$ and let $I={\mathbb R}$ or $I=[0,\infty).$ Then it is said that a function $f\in M(I : X)$ is Stepanov $p(x)$-bounded, $S^{p(x)}$-bounded in short, iff $f(\cdot +t) \in L^{p(x)}([0,1]: X)$ for all $t\in I,$ and $\sup_{t\in I} \|f(\cdot +t)\|_{p(x)}<\infty ,$ i.e.,
$$
\|f\|_{S^{p(x)}}:=\sup_{t\in I}\inf\Biggl\{ \lambda>0 : \int_{0}^{1}\varphi_{p(x)}\Biggl( \frac{\|f(x +t)\|}{\lambda}\Biggr)\, dx \leq 1\Biggr\}<\infty.
$$
By $L_{S}^{p(x)}(I:X)$ we denote the vector space consisting of all such functions.
\end{defn}

Denote by $\hookrightarrow$
a continuous embedding between normed spaces. Furnished with the norm $\|\cdot\|_{S^{p(x)}}$, the space $L_{S}^{p(x)}(I:X)$ consisted of all $S^{p(x)}$-bounded functions is a Banach one and we have $L_{S}^{p(x)}(I:X) \hookrightarrow L_{S}^{1}(I:X),$ for any $p\in {\mathcal P}([0,1]).$ The space $L_{S}^{p(x)}(I:X)$ is translation invariant in the sense that, for every $f\in L_{S}^{p(x)}(I:X)$ and $\tau \in I,$ we have $f(\cdot+\tau) \in L_{S}^{p(x)}(I:X).$ 

In \cite{toka-marek}, we have introduced the concept of (asymptotical) $S^{p(x)}$-almost periodicity as follows:

\begin{defn}\label{sasasa-ap}
\begin{itemize}
\item[(i)]
Let $p\in {\mathcal P}([0,1]),$ and let $I={\mathbb R}$ or $I=[0,\infty).$ Then it is said that a function $f\in L_{S}^{p(x)}(I:X)$ is Stepanov $p(x)$-almost periodic, Stepanov $p(x)$-a.p. in short, iff the function $\hat{f} : I \rightarrow L^{p(x)}([0,1]: X)$ is almost periodic. By $APS^{p(x)}(I : X)$ we denote the vector space consisting of all such functions.
\item[(ii)] Let $p\in {\mathcal P}([0,1]),$ and let $I=[0,\infty).$ Then it is said that a function $f\in L_{S}^{p(x)}(I:X)$ is asymptotically Stepanov $p(x)$-almost periodic,  asymptotically Stepanov $p(x)$-a.p. in short, iff the function $\hat{f} : I \rightarrow L^{p(x)}([0,1]: X)$ is  asymptotically almost periodic. By $AAPS^{p(x)}(I : X)$ we denote the vector space consisting of all such functions; the abbreviation $S^{p(x)}_{0}([0,\infty):X)$
will be used to denote the set of all functions
$q\in  L_{S}^{p(x)}([0,\infty): X)$ such that $\hat{q}\in C_{0}([0,\infty) : L^{p(x)}([0,1]:X)).$
\end{itemize}
\end{defn}

We know that the space $APS^{p(x)}(I : X)$ is translation invariant in the sense that, for every $f\in APS^{p(x)}(I : X)$ and $\tau \in I,$ we have $f(\cdot+\tau) \in APS^{p(x)}(I : X).$ A similar statement holds for the space $AAPS^{p(x)}([0,\infty) : X).$

Now we introduce the concept of $S^{p(x)}$-almost automorphy as follows:

\begin{defn}\label{sasasa}
Let $p\in {\mathcal P}([0,1]).$ Then it is said that a function $f\in L_{S}^{p(x)}({\mathbb R}:X)$ is Stepanov $p(x)$-almost automorphic, Stepanov $p(x)$-a.a. in short, iff for
every real sequence $(a_{n}),$ there exists a subsequence $(a_{n_{k}})$ 
and a function $g\in L_{S}^{p(x)}({\mathbb R}:X)$ such that
\begin{align}\label{ne-mu-je-stepanov-p(x)}
\lim_{k\rightarrow \infty}\bigl \| f\bigl(a_{n_{k}}+\cdot+t\bigr) -g(\cdot+t)\bigr \|_{L^{p(x)}([0,1]: X)}  =0
\end{align}
and
\begin{align*}
\lim_{k\rightarrow \infty}\bigl \| g\bigl( \cdot-a_{n_{k}}+t\bigr) -f(\cdot+t)\bigr \|_{L^{p(x)}([0,1]: X)}  =0
\end{align*}
for each $ t\in {\mathbb R}.$
By $AAS^{p(x)}({\mathbb R} : X)$ we denote the vector subspace of $L_{S}^{p(x)}({\mathbb R}:X)$ consisting of all such functions.
\end{defn}

For asymptotical $S^{p(x)}$-almost automorphy, we will use the following notion:

\begin{defn}\label{gaston-toka}
Let $p\in {\mathcal P}([0,1]).$ 
A function $f\in L_{S}^{p(x)}([0,\infty):X)$ is said to be asymptotically Stepanov $p(x)$-almost automorphic, asymptotically $S^{p(x)}$-a.a. shortly, iff there exist an $S^{p(x)}$-almost automorphic
function $g \in AAS^{p(x)}({\mathbb R} : X)$ and a function $q\in  L_{S}^{p(x)}([0,\infty): X)$ such that $f(t)=g(t)+q(t),$ $t\geq 0$ and $\hat{q}\in C_{0}([0,\infty) : L^{p(x)}([0,1]: X)).$ 
\end{defn}

It follows immediately from definition that the spaces $AAS^{p(x)}({\mathbb R} : X)$ and $AAAS^{p(x)}([0,\infty) : X)$ are translation invariant, with the meaning clear. Furthermore,
it can be simply checked that the notions of Stepanov $p(x)$-boundedness and (asymptotical) Stepanov $p(x)$-almost automorphy are equivalent with those ones introduced in the previous section, provided that $p(x)\equiv p\geq 1$ is a constant function. Furthermore, the following holds:

\begin{prop}\label{oze}
Let $p\in {\mathcal P}([0,1])$ and let $f : {\mathbb R} \rightarrow X$ be $S^{p(x)}$-almost periodic. Then $f(\cdot)$ is $S^{p(x)}$-almost automorphic. 
\end{prop}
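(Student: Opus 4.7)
The plan is to transfer the almost periodicity of the Bochner transform $\hat{f}: \mathbb R \to L^{p(x)}([0,1]:X)$, which is precisely the hypothesis by Definition \ref{sasasa-ap}(i), into almost automorphy of $f$ itself in the sense of Definition \ref{sasasa}. Since any Bohr almost periodic function with values in a Banach space is compactly almost automorphic (Bochner's criterion, cited in Section \ref{section2}), I would begin by extracting from any given sequence $(a_n)$ a subsequence $(a_{n_k})$ and producing a function $G: \mathbb R \to L^{p(x)}([0,1]:X)$ such that $\hat{f}(t+a_{n_k}) \to G(t)$ and $G(t-a_{n_k}) \to \hat{f}(t)$ uniformly in $t \in \mathbb R$.

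The main obstacle will be to realize $G$ as the Bochner transform of a single measurable function $g: \mathbb R \to X$, since a priori $G(t)$ is only an $L^{p(x)}$-equivalence class on $[0,1]$ depending on $t$, with no manifest consistency as $t$ varies. I would define $g$ almost everywhere by patching together the integer translates of $G$: set $g(n+s) := G(n)(s)$ for $n \in \mathbb Z$ and $s \in [0,1)$. Measurability of $g$ then follows from the measurability of each representative $G(n)$, and $g \in L_S^{p(x)}(\mathbb R:X)$ follows from the uniform bound $\sup_{t \in \mathbb R} \|G(t)\|_{L^{p(x)}([0,1]:X)} \le \|\hat{f}\|_\infty < \infty$, since each $\hat{g}(t)$ is supported on at most two adjacent unit intervals of $G$.

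To establish $\hat{g}(t) = G(t)$ in $L^{p(x)}([0,1]:X)$ for every $t \in \mathbb R$, I would exploit the intrinsic translation compatibility of the Bochner transform of $f$. Writing $t = n + r$ with $n = \lfloor t \rfloor$ and $r \in [0,1)$, the almost-everywhere identity
\[
\hat{f}(t+a_{n_k})(s) = \begin{cases} \hat{f}(n+a_{n_k})(r+s), & s \in [0, 1-r), \\ \hat{f}(n+1+a_{n_k})(r+s-1), & s \in [1-r, 1), \end{cases}
\]
combined with the $L^{p(x)}([0,1]:X)$-convergences $\hat{f}(n+a_{n_k}) \to G(n)$ and $\hat{f}(n+1+a_{n_k}) \to G(n+1)$, yields after restriction and shifting on each subinterval a limit function which, by the very definition of $g$, equals $\hat{g}(t)$ almost everywhere on $[0,1]$. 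Uniqueness of $L^{p(x)}$-limits then forces $\hat{g}(t) = G(t)$.

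Once the identification $\hat{g} = G$ is in place, both convergence requirements in Definition \ref{sasasa} reduce to rewritings of the two uniform convergences obtained in the first step: for each $t \in \mathbb R$, $\|f(a_{n_k}+\cdot+t) - g(\cdot+t)\|_{L^{p(x)}([0,1]:X)} = \|\hat{f}(t+a_{n_k}) - \hat{g}(t)\|_{L^{p(x)}([0,1]:X)} \to 0$ by construction of $G$, and similarly $\|g(\cdot-a_{n_k}+t) - f(\cdot+t)\|_{L^{p(x)}([0,1]:X)} = \|G(t - a_{n_k}) - \hat{f}(t)\|_{L^{p(x)}([0,1]:X)} \to 0$. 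This will give $f \in AAS^{p(x)}(\mathbb R:X)$ and complete the proof.
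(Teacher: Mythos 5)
Your proposal is correct and follows essentially the same route as the paper's proof: invoke Bochner's criterion to obtain a limit function $G:{\mathbb R}\rightarrow L^{p(x)}([0,1]:X)$ with uniform convergence of the translated Bochner transforms, define $g$ by patching the integer translates via $g(\cdot):=G(\lfloor \cdot\rfloor)(\cdot-\lfloor \cdot\rfloor)$, and identify $\hat{g}(t)$ with $G(t)$ for non-integer $t$ by restricting the convergence to the two subintervals determined by $t-\lfloor t\rfloor$ and using uniqueness of limits in $L^{p(x)}$. The only cosmetic difference is that the paper phrases the splitting in terms of $\lfloor t\rfloor$ and $\lceil t\rceil$ rather than $n$ and $n+1$.
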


\begin{proof}
Let $(a_{n})$ be a given real sequence. By Bochner's criterion \cite{diagana}, there exists a subsequence $(a_{n_{k}})$ of $(a_{n})$ and a uniformly continuous bounded function $G : {\mathbb R} \rightarrow L^{p(x)}([0,1] : X)$ such that
\begin{align}\label{crfl}
\lim_{k\rightarrow \infty}\sup_{t\in {\mathbb R}}\Bigl\| f\bigl(t+a_{n_{k}}+\cdot)-G(t)(\cdot)\Bigr \|_{L^{p(x)}([0,1] : X)}=0.
\end{align}
It suffices to show that there exists a function $g : {\mathbb R} \rightarrow L^{p(x)}([0,1] : X)$ such that $G(t)(s)=g(t+s)$ for any $t\in {\mathbb R}$ and a.e. $s\in [0,1].$ We define $g(\cdot):=G(\lfloor \cdot \rfloor)(\cdot-\lfloor \cdot \rfloor).$ Then it is clear that, for every $t\in {\mathbb Z},$ we have $G(t)(\cdot)=g(t+\cdot)$ a.e. on $[0,1].$ Suppose that $t\notin {\mathbb Z}.$ Since, clearly, $g : {\mathbb R} \rightarrow L^{p(x)}([0,1] : X),$ we only need to prove that $G(t)(s)=G(\lfloor t \rfloor)(t-\lfloor t \rfloor+s)$ for a.e. $s\in (0,\lceil t \rceil -t)$ and $G(t)(s)=G(\lceil t \rceil)(t-\lceil t \rceil +s)$ for a.e. $s\in (\lceil t \rceil -t,1).$ For the sake of brevity, we will prove the validity of second equality. Since $\int_{\lceil t \rceil -t}^{1}\cdot \cdot \cdot \leq \int^{1}_{0}\cdot \cdot \cdot,$ \eqref{crfl} implies that 
\begin{align*}
& \inf\Biggl\{ \lambda >0  : \int^{1}_{\lceil t \rceil -t}\varphi_{p(x)}\Biggl( \frac{\| f(t+a_{n_{k}}+x)-G(t)(x)\|}{\lambda} \Biggr)\, dx\leq 1\Biggr\} 
\\ & =\inf\Biggl\{ \lambda >0  : \int^{1+t-\lceil t \rceil}_{0}\varphi_{p(x)}\Biggl( \frac{\| f(\lceil t \rceil +a_{n_{k}}+x)-G(t)(x-\lceil t \rceil+t)\|}{\lambda} \Biggr)\, dx\leq 1\Biggr\} \rightarrow 0,
\end{align*}  
as $k\rightarrow \infty.$ On the other hand, by \eqref{crfl} and $\int^{1+t-\lceil \rceil}_{0}\cdot \cdot \cdot \leq \int^{1}_{0}\cdot \cdot \cdot,$ we have
\begin{align*}
\inf\Biggl\{ \lambda >0  : \int^{1+t-\lceil t \rceil}_{0}\varphi_{p(x)}\Biggl( \frac{\| f(\lceil t \rceil +a_{n_{k}}+x)-G(t)(x-\lceil t \rceil+t)\|}{\lambda} \Biggr)\, dx\leq 1\Biggr\} \rightarrow 0,
\end{align*}
as $k\rightarrow \infty.$ The uniqueness of limits in the space $L^{p(x)}([0,1+t-\lceil t \rceil] : X)$ yields the required equality.
\end{proof}

Making use of Proposition \ref{oze} and \cite[Proposition 3.12]{toka-marek}, we immediately get: 

\begin{prop}\label{ozeze}
Let $p\in {\mathcal P}([0,1])$ and let $f : [0,\infty) \rightarrow X$ be asymptotically $S^{p(x)}$-almost periodic. Then $f(\cdot)$ is asymptotically $S^{p(x)}$-almost automorphic. 
\end{prop}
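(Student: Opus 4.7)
The plan is to combine a structural decomposition result for asymptotically Stepanov $p(x)$-almost periodic functions with Proposition \ref{oze}. Concretely, the definition of asymptotical $S^{p(x)}$-almost periodicity from Definition \ref{sasasa-ap}(ii) only says that the associated function $\hat f : [0,\infty) \rightarrow L^{p(x)}([0,1]:X)$ is asymptotically almost periodic, which a priori means $\hat f = G_1 + Q_1$ for some almost periodic $G_1$ on $[0,\infty)$ and some $Q_1 \in C_0([0,\infty):L^{p(x)}([0,1]:X))$; this does not immediately give a decomposition at the level of $f$ itself. However, the cited result \cite[Proposition 3.12]{toka-marek} supplies exactly such a lift, namely that any asymptotically $S^{p(x)}$-almost periodic $f$ can be written as $f(t) = g(t) + q(t)$ for $t \geq 0$, where $g : \mathbb R \rightarrow X$ belongs to $APS^{p(x)}(\mathbb R : X)$ and $q \in S^{p(x)}_{0}([0,\infty):X)$.

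Once this decomposition is in hand, the proof reduces to a single application of Proposition \ref{oze}. Indeed, Proposition \ref{oze} guarantees that the Stepanov $p(x)$-almost periodic function $g$ is itself Stepanov $p(x)$-almost automorphic, i.e., $g \in AAS^{p(x)}(\mathbb R : X)$. Since $q$ already satisfies $\hat q \in C_0([0,\infty):L^{p(x)}([0,1]:X))$ by construction, the decomposition $f = g + q$ is exactly the one required by Definition \ref{gaston-toka}, and hence $f$ is asymptotically $S^{p(x)}$-almost automorphic.

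There is essentially no obstacle here beyond the invocation of the two ingredients: the nontrivial content is entirely packaged into Proposition \ref{oze} (whose proof carefully extracts the limit function $g$ from the a.p. map $G$ into $L^{p(x)}([0,1]:X)$ by setting $g(\cdot) = G(\lfloor \cdot \rfloor)(\cdot - \lfloor \cdot \rfloor)$) and into \cite[Proposition 3.12]{toka-marek} (which is used here as a black box to produce the $g$--$q$ splitting on $[0,\infty)$ with $g$ extended to all of $\mathbb R$). If one were forced to avoid citing that proposition, the main obstacle would be showing that the almost periodic profile $G_1$ extracted at the level of $\hat f$ comes from a genuine function $g$ defined on $\mathbb R$; but this is exactly the gluing argument already carried out in the proof of Proposition \ref{oze}, so no new work is required.
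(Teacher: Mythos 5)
Your argument is exactly the one the paper intends: it states Proposition \ref{ozeze} as an immediate consequence of Proposition \ref{oze} together with \cite[Proposition 3.12]{toka-marek}, and your proposal simply spells out how those two ingredients combine (the cited proposition supplies the decomposition $f=g+q$ with $g\in APS^{p(x)}({\mathbb R}:X)$ and $\hat q\in C_{0}([0,\infty):L^{p(x)}([0,1]:X))$, and Proposition \ref{oze} upgrades $g$ to $AAS^{p(x)}({\mathbb R}:X)$, matching Definition \ref{gaston-toka}). The proposal is correct and follows the same route as the paper.
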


Assume that $p\in {\mathcal P}([0,1]),$ resp. $p\in D_{+}([0,1]),$ and $1 \leq p^{-}\leq p(x) \leq p^{+} <\infty$ for a.e. $x\in [0,1] .$ Then Lemma \ref{aux}(ii) implies that $L^{\infty}({\mathbb R}: X) \hookrightarrow L^{p(x)}_{S}({\mathbb R} : X)\hookrightarrow L^{1}_{S}({\mathbb R}: X),$ resp., $L^{p^{+}}_{S}({\mathbb R} : X)\hookrightarrow  L^{p(x)}_{S}({\mathbb R} : X) \hookrightarrow L^{p^{-}}_{S}({\mathbb R} : X).$ 
Therefore, a similar line of reasoning as in almost periodic case shows that the following theorem holds true; for the sake of completeness, we will prove only the second part of (i), $AAAS^{p(x)}( [0,\infty) : X) \hookrightarrow AAAS^{1}( [0,\infty) : X)$:
  
\begin{thm}\label{toka-maremare}
\begin{itemize}
\item[(i)] Let $p\in {\mathcal P}([0,1]).$ Then $AAS^{p(x)}({\mathbb R} : X) \hookrightarrow AAS^{1}({\mathbb R} : X)$ and $AAAS^{p(x)}( [0,\infty) : X) \hookrightarrow AAAS^{1}( [0,\infty) : X).$  
\item[(ii)] Let $p\in D_{+}([0,1])$ and $1 \leq p^{-}\leq p(x) \leq p^{+} <\infty$ for a.e. $x\in [0,1] $. Then 
$AAS^{p^{+}}({\mathbb R}: X)\hookrightarrow  AAS^{p(x)}({\mathbb R}: X) \hookrightarrow AAS^{p^{-}}({\mathbb R} : X)$ and $AAAS^{p^{+}}([0,\infty): X)\hookrightarrow  AAAS^{p(x)}([0,\infty): X) \hookrightarrow AAAS^{p^{-}}([0,\infty) : X).$
\item[(iii)] If $p,\ q\in {\mathcal P}([0,1])$ and $p\leq q$ a.e. on $[0,1],$ then
$
AAS^{q(x)}({\mathbb R} : X) \hookrightarrow AAS^{p(x)}({\mathbb R} : X)
$
and $
AAAS^{q(x)}([0,\infty) : X) \hookrightarrow AAAS^{p(x)}([0,\infty) : X).
$
\end{itemize}
\end{thm}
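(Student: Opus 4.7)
The common engine of all three parts is the fiberwise embedding of variable-exponent Lebesgue spaces on $[0,1]$ supplied by Lemma \ref{aux}(ii): since $[0,1]$ has finite Lebesgue measure and every $p\in{\mathcal P}([0,1])$ satisfies $p(x)\geq 1$, whenever $q\leq p$ a.e.\ there is a constant $C_{p,q}>0$ with $\|u\|_{L^{q(x)}([0,1]:X)}\leq C_{p,q}\|u\|_{L^{p(x)}([0,1]:X)}$. Choosing $q\equiv 1$ handles part (i); choosing $q\equiv p^{-}$ together with $p\equiv p^{+}$ on the outside handles (ii); an arbitrary pair $p\leq q$ yields (iii). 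In each case the idea is the same: the subsequence $(a_{n_{k}})$ and limit function produced by the finer space's Stepanov $p(x)$-almost automorphy automatically witness almost automorphy in the coarser space once their defining limits are transported through the inequality above.

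I will focus on the only promised step, namely $AAAS^{p(x)}([0,\infty):X)\hookrightarrow AAAS^{1}([0,\infty):X)$. Fix $f\in AAAS^{p(x)}([0,\infty):X)$ and, via Definition \ref{gaston-toka}, write $f=g+q$ on $[0,\infty)$ with $g\in AAS^{p(x)}({\mathbb R}:X)$ and $\hat{q}\in C_{0}([0,\infty):L^{p(x)}([0,1]:X))$. The first task is to upgrade $g$ to an element of $AAS^{1}({\mathbb R}:X)$, which is exactly the first half of (i) applied to $g$: given any real sequence $(a_{n})$, extract the subsequence $(a_{n_{k}})$ and the limit function $h\in L_{S}^{p(x)}({\mathbb R}:X)$ furnished by Definition \ref{sasasa}, and feed the two defining limits \eqref{ne-mu-je-stepanov-p(x)} through $\|\cdot\|_{L^{1}([0,1]:X)}\leq C_{p,1}\|\cdot\|_{L^{p(x)}([0,1]:X)}$. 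The very same $h$, now regarded as an element of $L_{S}^{1}({\mathbb R}:X)$ (again by the same inequality applied uniformly in the translation parameter), serves as the $S^{1}$-almost automorphic limit.

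The second task is to transfer the remainder. Composing the continuous map $t\mapsto\hat{q}(t)$ with the bounded linear embedding $L^{p(x)}([0,1]:X)\hookrightarrow L^{1}([0,1]:X)$ preserves continuity, and the estimate $\|\hat{q}(t)\|_{L^{1}([0,1]:X)}\leq C_{p,1}\|\hat{q}(t)\|_{L^{p(x)}([0,1]:X)}\to 0$ as $t\to\infty$ places $\hat{q}$ in $C_{0}([0,\infty):L^{1}([0,1]:X))$. The decomposition $f=g+q$ then realises $f$ as asymptotically $S^{1}$-almost automorphic, and the same chain of inequalities supplies the continuity constant of the embedding. There is no real obstacle in the argument; the only substantive point worth checking is that the \emph{same} subsequence and the \emph{same} limit function witness almost automorphy in both the finer and the coarser space, and this is forced automatically by the fact that Lemma \ref{aux}(ii) provides a continuous embedding rather than merely a comparison of pointwise values. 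Parts (ii) and (iii) follow by the identical template with the appropriate pair of exponents.
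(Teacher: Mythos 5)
Your proposal is correct and follows essentially the same route as the paper: both reduce everything to the fiberwise embedding $L^{p(x)}([0,1]:X)\hookrightarrow L^{q(x)}([0,1]:X)$ for $q\leq p$ from Lemma \ref{aux}(ii), transport the defining limits (with the same subsequence and the same limit function) through that inequality, and for the asymptotic case push the decomposition $f=g+q$ and the $C_{0}$-property of $\hat{q}$ through the same embedding. Your version is slightly more explicit than the paper's (which dismisses the almost automorphic part as ``clear'' and only writes out the $AAAS$ half of (i)), and you correctly state the direction of the norm inequality, which the paper's displayed estimate has reversed.
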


\begin{proof}
Let $f\in AAAS^{p(x)}( [0,\infty) : X) .$ By definition, there exist an $S^{p(x)}$-almost automorphic
function $g(\cdot)$ and a function $q\in  L_{S}^{p(x)}([0,\infty): X)$ such that $f(t)=g(t)+q(t),$ $t\geq 0$ and $\hat{q}\in C_{0}([0,\infty) : L^{p(x)}([0,1]: X)).$ It is clear that $g(\cdot)$ is $S^{1}$-almost automorphic and $\hat{q}\in C_{0}([0,\infty) : L^{1}([0,1]: X))$ because $L^{p(x)}([0,1]: X) \hookrightarrow L^{1}([0,1]: X);$ therefore, $f\in AAAS^{1}( [0,\infty) : X).$ Using the fact that $ L^{p(x)}_{S}([0,\infty) : X) \hookrightarrow L^{1}_{S}([0,\infty) : X),$ it readily follows that there is a finite constant $c>0,$ independent of $f(\cdot),$ such that
$$
\bigl\|f\bigr\|_{L_{S}^{p(x)}([0,\infty) : X)} \leq c \bigl\|f\bigr\|_{L_{S}^{1}([0,\infty) : X)}.
$$ 
This completes the proof.
\end{proof}

{\sc Problem.} In \cite{toka-marek}, we have proved the following:
If $p\in D_{+}([0,1]),$ then 
\begin{align*}
L^{\infty}({\mathbb R}:X) \cap APS^{p(x)}({\mathbb R} : X)=L^{\infty}({\mathbb R}:X) \cap APS^{1}({\mathbb R}: X)
\end{align*}
and
\begin{align*}
L^{\infty}( [0,\infty):X) \cap AAPS^{p(x)}( [0,\infty) : X)=L^{\infty}( [0,\infty):X) \cap AAPS^{1}( [0,\infty): X).
\end{align*}
The proof given in the above-mentioned paper does not work for almost automorphy with variable exponent. Because of that, we would like to ask whether the assumption 
$p\in D_{+}([0,1])$ implies 
\begin{align*}
L^{\infty}({\mathbb R}:X) \cap AAS^{p(x)}({\mathbb R} : X)=L^{\infty}({\mathbb R}:X) \cap AAS^{1}({\mathbb R}: X)
\end{align*}
and
\begin{align*}
L^{\infty}( [0,\infty):X) \cap AAAS^{p(x)}( [0,\infty) : X)=L^{\infty}( [0,\infty):X) \cap AAAS^{1}( [0,\infty): X)?
\end{align*}

The subsequent lemma can be deduced following the lines of proof of \cite[Proposition 3.5]{toka-marek}:

\begin{lem}\label{lepo}
Assume that $p\in {\mathcal P}([0,1])$ and $q\in C_{0}([0,\infty) : X).$ Then $q\in L_{S}^{p(x)}([0,\infty) : X)$ and $\hat{q}\in C_{0}([0,\infty) : L^{p(x)}([0,1] : X)).$
\end{lem}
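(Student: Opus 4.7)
The plan is to reduce everything to a single simple embedding: for any bounded measurable function $f:[0,1]\to X,$ its Luxemburg norm satisfies $\|f\|_{L^{p(x)}([0,1]:X)}\leq \|f\|_{L^{\infty}([0,1]:X)}$. To verify this, I would plug in $\lambda=\|f\|_{\infty}$ into the defining infimum; since $\|f(x)\|/\lambda\leq 1$ a.e. on $[0,1]$, the integrand $\varphi_{p(x)}(\|f(x)\|/\lambda)$ is bounded by $1$ (it equals $0$ on $\{p(x)=\infty\}$ by the definition of $\varphi_{p(x)}$, and equals $(\|f(x)\|/\lambda)^{p(x)}\leq 1$ where $p(x)<\infty$), so $\rho(f/\lambda)\leq 1$. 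This embedding is the workhorse and the only nontrivial ingredient; the rest is a translation of standard $C_{0}$ facts through it.

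With that in hand, Stepanov $p(x)$-boundedness is immediate: set $M:=\|q\|_{\infty}<\infty$ (finite since $q\in C_{0}([0,\infty):X)$); applying the embedding to $f=q(\cdot+t)$ yields $\|q(\cdot+t)\|_{L^{p(x)}([0,1]:X)}\leq M$ for every $t\geq 0$, so $q\in L^{p(x)}_{S}([0,\infty):X)$. For the decay statement, fix $\varepsilon>0$ and choose $T>0$ so that $\|q(u)\|<\varepsilon$ whenever $u\geq T$; then $\|\hat q(t)\|_{L^{p(x)}([0,1]:X)}\leq \sup_{s\in[0,1]}\|q(t+s)\|<\varepsilon$ for all $t\geq T$, proving $\|\hat q(t)\|_{L^{p(x)}([0,1]:X)}\to 0$ as $t\to \infty$.

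It remains to verify continuity of the map $\hat q:[0,\infty)\to L^{p(x)}([0,1]:X)$. I would first invoke the standard fact that a continuous function on $[0,\infty)$ vanishing at infinity is uniformly continuous on $[0,\infty)$ (a short argument using a compact exhaustion plus the $C_{0}$ property). Then, for $t,t_{0}\geq 0$, the embedding gives
\[
\bigl\|\hat q(t)-\hat q(t_{0})\bigr\|_{L^{p(x)}([0,1]:X)}\leq \sup_{s\in[0,1]}\bigl\|q(t+s)-q(t_{0}+s)\bigr\|,
\]
which tends to $0$ as $t\to t_{0}$ by uniform continuity. Combining these facts yields $\hat q\in C_{0}([0,\infty):L^{p(x)}([0,1]:X))$, finishing the proof.

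I do not anticipate a genuine obstacle here; the only slightly delicate point is keeping track of the case $p(x)=\infty$ inside $\varphi_{p(x)}$ when justifying the $L^{\infty}\hookrightarrow L^{p(x)}$ embedding on $[0,1]$, which is why I would write out that step explicitly rather than appeal to Lemma \ref{aux}(ii) (whose stated direction goes the wrong way for what is needed).
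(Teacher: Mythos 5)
Your proof is correct and is essentially the argument the paper has in mind (the paper gives no proof of Lemma \ref{lepo}, deferring to the analogous \cite[Proposition 3.5]{toka-marek}): everything reduces to the norm-one embedding $L^{\infty}([0,1]:X)\hookrightarrow L^{p(x)}([0,1]:X)$, which you verify directly from the definition of $\varphi_{p(x)}$, combined with the uniform continuity of functions in $C_{0}([0,\infty):X)$. One small correction to your closing parenthetical: Lemma \ref{aux}(ii) does in fact give the needed embedding --- take the lemma's larger exponent to be the constant function $\infty$ and the smaller one to be $p(x)$, noting $p(x)\leq \infty$ a.e.; the paper invokes it for exactly this purpose just before Theorem \ref{toka-maremare} --- though your explicit computation is harmless and has the advantage of producing the sharp constant $1$ rather than an unspecified embedding constant.
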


For the sequel, it is worth noting that, due to an elementary line of reasoning, we have $AA_{c}({\mathbb R} : X)=AAS^{\infty}({\mathbb R}:X) \cap C_{b}({\mathbb R} : X).$ Hence, the function $f(\cdot)$ cannot belong to the class $AAS^{p(x)}({\mathbb R} : X)$ if $f(\cdot)$ is almost automorphic, not compactly almost automorphic, and $p(x)\equiv \infty,$ $x\in [0,1]$. Similarly, if $g(\cdot)$ is almost automorphic, not compactly almost automorphic and $q\in C_{0}([0,\infty) : X),$ then the function $f\equiv g+q$ cannot belong to the class $AAAS^{\infty}([0,\infty) : X).$ In the following proposition, we will find a simple sufficient condition on $p\in {\mathcal P}([0,1])$ ensuring that 
an (asymptotically) almost automorphic function is (asymptotically) $S^{p(x)}$-almost automorphic:

\begin{prop}\label{propa}
Let $p\in {\mathcal P}([0,1]),$ let $f : {\mathbb R} \rightarrow X$ be almost automorphic, resp., $f : [0,\infty) \rightarrow X$ be asymptotically almost automorphic, and let 
\begin{align}\label{salenjak-auto}
\int^{1}_{0}\lambda^{p(x)}\, dx<\infty\mbox{ for all }\lambda>0.
\end{align}
Then $f(\cdot)$ is $S^{p(x)}$-almost automorphic, resp., $f(\cdot)$ is asymptotically $S^{p(x)}$-almost automorphic.
\end{prop}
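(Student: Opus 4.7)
The plan is to upgrade pointwise convergence of translates (given by almost automorphy) to convergence in the Luxemburg norm of $L^{p(x)}([0,1]:X)$ via Lemma \ref{aux}(iii), with the crucial input of \eqref{salenjak-auto} being that it puts every constant function into $E^{p(x)}([0,1])$.

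I first treat the almost automorphic case. Since almost automorphic functions are bounded, set $M:=\|f\|_{\infty}<\infty$. Given a real sequence $(b_{n})$, almost automorphy yields a subsequence $(a_{n_{k}})$ and a function $g:{\mathbb R}\to X$ with $f(t+a_{n_{k}})\to g(t)$ and $g(t-a_{n_{k}})\to f(t)$ pointwise for $t\in {\mathbb R}$; in particular $\|g(t)\|\le M$ for all $t$. Fix $t\in {\mathbb R}$ and consider
$$
F_{k}(s):=f(a_{n_{k}}+s+t)-g(s+t),\qquad s\in [0,1].
$$
Then $F_{k}(s)\to 0$ pointwise on $[0,1]$ and $\|F_{k}(s)\|\le 2M$ for every $k$ and a.e. $s$. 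Condition \eqref{salenjak-auto} says exactly that the constant function $s\mapsto 2M$ lies in $E^{p(x)}([0,1])$: for any $\lambda>0$, $\rho(\lambda\cdot 2M)=\int_{0}^{1}(2M\lambda)^{p(x)}\,dx<\infty$ (note that \eqref{salenjak-auto} also forces $p(x)<\infty$ a.e., so $\varphi_{p(x)}(\cdot)$ coincides with the power function on the relevant set). Lemma \ref{aux}(iii) therefore gives $\|F_{k}\|_{L^{p(x)}([0,1]:X)}\to 0$, which is the first limit in Definition \ref{sasasa}. The second limit is obtained by the identical argument applied to $s\mapsto g(s-a_{n_{k}}+t)-f(s+t)$, again dominated by $2M$.

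To read this off as $f\in AAS^{p(x)}({\mathbb R}:X)$ one must first know that $f\in L_{S}^{p(x)}({\mathbb R}:X)$. But since $\|f(\cdot+t)\|\le M$ uniformly in $t$ and the constant $M$ belongs to $L^{p(x)}([0,1])$ by \eqref{salenjak-auto}, this Stepanov $p(x)$-boundedness is immediate, and the same for $g$.

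For the asymptotic case, decompose $f=g+q$ with $g:{\mathbb R}\to X$ almost automorphic and $q\in C_{0}([0,\infty):X)$. The first part of the proposition, applied to $g$, gives $g\in AAS^{p(x)}({\mathbb R}:X)$. Lemma \ref{lepo} simultaneously yields $q\in L_{S}^{p(x)}([0,\infty):X)$ and $\hat q\in C_{0}([0,\infty):L^{p(x)}([0,1]:X))$, so Definition \ref{gaston-toka} is satisfied. The only real point of the argument is the reduction to Lemma \ref{aux}(iii); I do not anticipate a separate obstacle, since \eqref{salenjak-auto} is tailored precisely to make the constant dominating function admissible.
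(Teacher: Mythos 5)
Your proof is correct and follows essentially the same route as the paper's: both arguments take the pointwise limits supplied by almost automorphy, observe that \eqref{salenjak-auto} forces $p(x)<\infty$ a.e.\ and places the constant dominating function $2\|f\|_{\infty}$ in $E^{p(x)}([0,1])$, and then invoke Lemma \ref{aux}(iii) to upgrade to convergence in $L^{p(x)}([0,1]:X)$, handling the asymptotic case via Lemma \ref{lepo}. No substantive differences.
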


\begin{proof}
The argumentation used in almost periodic case shows that $f(\cdot)$ is $S^{p(x)}$-bounded and $\|f\|_{L_{S}^{p(x)}}\leq \|f\|_{\infty}.$ 
Let $(b_{n})$ be a given real sequence. Then there exist a subsequence $(a_{n})$ of $(b_{n})$ and a map $g : {\mathbb R} \rightarrow X$ such that
\eqref{first-equ} holds, pointwise for $t\in {\mathbb R}.$ It is well known that $g\in L^{\infty}({\mathbb R} : X)$ and, by \cite[Proposition 3.6(i)]{toka-marek}, $g\in  L_{S}^{p(x)}({\mathbb R}:X).$ Due to \eqref{salenjak-auto}, we get that the Lebesgue measure of the set $\{ x\in [0,1] : p(x)=\infty\}$ is equal to zero and therefore any essentially bounded function $h: {\mathbb R} \rightarrow X$ satisfies that, for every $\lambda>0,$ we have $\int^{1}_{0}\varphi_{p(x)}(\|h(x)\|/\lambda)\, dx <\infty .$ Using this fact, we can apply
Lemma \ref{aux}(iii) in order to see that
$$
\lim_{n\rightarrow \infty}\bigl \| f\bigl( t+a_{n}+\cdot\bigr)-g(t+\cdot)\bigr\|_{L^{p(x)}([0,1] : X)}=0
$$
and  
$$
\lim_{n\rightarrow \infty}\bigl \|g\bigl(t+\cdot-a_{n}\bigr)-f(t+\cdot)\bigr\|_{L^{p(x)}([0,1] : X)}=0,
$$
pointwise for $t\in {\mathbb R}.$
This completes the proof of proposition for $S^{p(x)}$-almost automorphy; the corresponding result for asymptotical $S^{p(x)}$-almost automorphy follows by combining this and Lemma \ref{lepo}. 
\end{proof}

Now we will continue our analyses from \cite[Example 3.11]{toka-marek}:

\begin{example}\label{ogran-prop}
Set sign$(0):=0.$ Then, for every almost periodic function $f: {\mathbb R} \rightarrow {\mathbb R},$ we know that the function $F(\cdot):=$sign$(f(\cdot))$ is Stepanov $p(x)$-almost periodic
for any $p\in D_{+}([0,1])$ as well as that the function $F(\cdot)$ is Stepanov $p(x)$-bounded for any $p\in {\mathcal P}([0,1]);$ see \cite{toka-marek}. By Proposition \ref{oze}, we have that 
$F\in AAS^{p(x)}({\mathbb R} : {\mathbb C})$ for any $p\in D_{+}([0,1]).$

In \cite{toka-marek}, we have further analyze the special case that $f(x):=\sin x+\sin \sqrt{2}x,$ $x\in {\mathbb R}$ and $p(x):=1-\ln x,$ $x\in [0,1],$ showing that $F\notin APS^{p(x)}({\mathbb R} : {\mathbb C}).$ Now we will verify that $F\notin AAS^{p(x)}({\mathbb R} : {\mathbb C}).$ For this, it is sufficient to construct a real sequence $(a_{n})$ such that, for every $n\in {\mathbb N}$ and every $\lambda \in (0,2/e),$ we have
$$
\int^{1}_{0}\Biggl| \frac{F(x+a_{2n})-F(x+a_{2n-1})}{\lambda}\Biggr|^{1-\ln x}\, dx=\infty ;
$$
(see \eqref{ne-mu-je-stepanov-p(x)} with $t=0,$ and observe that in this case the sequence $F(a_{n_{k}}+\cdot)_{k\in {\mathbb N}}$ needs to be a Cauchy one in $L^{p(x)}[0,1]$). But, we have proved that, for any $\lambda \in (0,2/e)$, any $l>0,$   any interval $I\subseteq {\mathbb R} \setminus \{0\}$ of length $l>0$ and any $\tau \in I,$ there exists $t\in {\mathbb R}$ such that 
\begin{align}\label{salenjak}
\int^{1}_{0} \Bigl( \frac{1}{\lambda}\Bigr)^{1-\ln x}&\Bigl| F  (x+t+\tau) -F(x+t) \Bigr|^{1-\ln x}\, dx =\infty.
\end{align}
Let $\lambda \in (0,2/e)$ be arbitrarily chosen, $l=1,$ $I=I_{n}=[n,n+1]$ and $\tau =n.$ Then we can find $t=t_{n}$ such that \eqref{salenjak} holds, so that the claimed follows by plugging $a_{2n-1}:=t_{n}$ and $a_{2n}:=t_{n}+n$ ($n\in {\mathbb N}$). 

In the case that $f(x):=\sin x,$ $x\in {\mathbb R},$ we have also proved that the function $F(\cdot)$ is $S^{p(x)}$-almost periodic for any $p\in {\mathcal P}([0,1]).$ By Proposition \ref{oze}, we have that $F(\cdot)$ is $S^{p(x)}$-almost automorphic for any $p\in {\mathcal P}([0,1]).$
\end{example}

To the best knowledge of authors, in the existing literature concerning Stepanov almost automorphic functions, the authors have examined only such functions that are Stepanov $p$-almost automorphic for any exponent $p\in [1,\infty),$ and therefore, Stepanov $p(x)$-almost automorphic for any function $p\in D_{+}([0,1])$ (cf. Theorem \ref{toka-maremare}(ii)). Therefore, it is natural to ask whether there exists a  Stepanov almost automorphic function that is not Stepanov $p$-almost automorphic for certain exponent $p\in (1,\infty).$ The answer is affirmative and, without going into full problematic concerning this and similar questions, we would like to recall that
H. Bohr and E. F$\o$lner have constructed, for any given number $p>1$, a Stepanov almost periodic function defined on the whole real axis that is Stepanov $p$-bounded and not Stepanov $p$-almost periodic (see \cite[Example, pp. 70-73]{bohr-folner}). This function, denoted here by $f(\cdot),$ is clearly Stepanov almost automorphic and now we will prove that $f(\cdot)$ cannot be Stepanov $p$-almost automorphic. Consider, for the sake of simplicity, the case that $h_{1}=2$ in the afore-mentioned theorem and suppose the contrary. Then it is well known that the mapping $\hat{f} : {\mathbb R} \rightarrow L^{p}([0,1]: X)$ is compactly almost automorphic. Since the class of almost automorphic functions coincides with the class of Levitan $N$-almost periodic functions (see e.g. \cite[p. 111]{diagana} and \cite[pp. 53-54]{levitan}), for every $\epsilon>0$ and $N>0,$ there exists a finite number $L>0$ such that any interval $I\subseteq {\mathbb R}$ contains a number $\tau \in I$ such that $\|\hat{f}(t\pm \tau) -\hat{f}(t)\|_{L^{p}([0,1]: X)}<\epsilon.$ Especially, with $\epsilon >0$ arbitrarily small and $N=3/2,$ we get the existence of a finite number $L>0$ such that 
any interval $I\subseteq {\mathbb R} \setminus [-1,1]$ contains a number $\tau \in I$ such that
$$
\int^{x+1}_{x}\bigl|f(s+\tau)-f(s)\bigr|^{p}\, ds <\epsilon^{p},\quad x\in [-3/2,3/2].
$$
With $x=-3/2,$ this implies 
$$
\int^{3/2}_{-3/2}\bigl|f(s+\tau)-f(s)\bigr|^{p}\, ds <2\epsilon^{p},
$$
which is in contradiction with the estimate $
\int^{3/2}_{-3/2}\bigl|f(s+\tau)-f(s)\bigr|^{p}\, ds\geq 2^{-p}$ (see \cite[p. 73, l.-9 - l.-4]{bohr-folner}).

\section{Generalized two-parameter almost automorphic type functions and composition principles}\label{dodato}

Suppose that $(Y,\|\cdot \|_{Y})$ is a complex Banach space, as well as that $I={\mathbb R}$ or $I=[0,\infty).$ 
By $C_{0}([0,\infty) \times Y : X)$ we denote the space consisting of all
continuous functions $h : [0,\infty)  \times Y \rightarrow X$ such that $\lim_{t\rightarrow \infty}h(t, y) = 0$ uniformly for $y$ in any compact subset of $Y .$ A continuous function $f : I  \times Y  \rightarrow X$ is called uniformly continuous on
bounded sets, uniformly for $t \in I$ iff for every $	\epsilon > 0$ and every bounded subset $K$ of $Y$ there
exists a number $\delta_{\epsilon,K }> 0$ such that $\|f(t , x)- f (t, y)\| \leq \epsilon$ for all $ t \in I$ and all $x,\ y \in K$ satisfying that $\|x-y\|_{Y}\leq \delta_{\epsilon,K }.$ If $f : I  \times Y  \rightarrow X,$ set $\hat{f}(t , y):=f(t +\cdot, y),$ $t\geq 0,$ $y\in Y.$

We need to recall the following well-known definition (see e.g. \cite{diagana} and \cite{nova-mono} for more details):

\begin{defn}\label{definicija}
Let $1\leq p <\infty.$
\begin{itemize}
\item[(i)]
A jointly continuous function $f : {\mathbb R} \times Y \rightarrow X$ is said to be almost automorphic 
iff
for every sequence of real numbers $(s_{n}')$ there exists a subsequence $(s_{n})$ such
that
$$
G(t, y) := \lim_{n\rightarrow \infty}F\bigl(t +s_{n}, y\bigr)
$$
is well defined for each $t \in {\mathbb R}$ and $y\in Y,$ and
$$
\lim_{n\rightarrow \infty}G\bigl(t -s_{n}, y\bigr) = F(t, y)
$$
for each $t \in {\mathbb R}$ and $y\in Y.$ The vector space consisting of such functions will be denoted by $AA({\mathbb R} \times Y :X).$
\item[(ii)] A bounded continuous function $f :  {\mathbb R} \times Y \rightarrow X$ is said to be
pseudo-almost automorphic iff $F = G +\Phi,$ where $G \in AA({\mathbb R} \times Y :X)$ and $\Phi \in
PAP_{0}({\mathbb R} \times Y:X);$ here, $PAP_{0}({\mathbb R} \times Y : X)$ denotes the space consisting of all continuous functions $\Phi : {\mathbb R} \times Y \rightarrow X$ such
that $\{\Phi (t,y) : t \in {\mathbb R} \} $ is bounded for all $y\in Y,$ and
$$
\lim_{r\rightarrow \infty} \frac{1}{2r}\int^{r}_{-r}\| \Phi(s,y)\|\, ds=0,
$$
uniformly in $y\in Y.$ The vector space consisting of such functions will be denoted by $PAA({\mathbb R} \times Y:X).$
\end{itemize}
\end{defn}

We introduce the notions of a Stepanov two-parameter $p(x)$-almost automorphic function and an asymptotically Stepanov two-parameter $p(x)$-almost automorphic function as follows:

\begin{defn}\label{stepanov-auto-wr}
Let $p\in {\mathcal P}([0,1]),$ and let $f : {\mathbb R} \times Y \rightarrow X$ be such that for each $y\in Y$ we have $f(\cdot ,y)\in L^{p(x)}_{S}({\mathbb R} : X).$ Then we say that $f(\cdot,\cdot)$ is Stepanov $p(x)$-almost automorphic
iff for every $y\in Y$ the mapping $f(\cdot,y)$ is $S^{p(x)}$-almost automorphic; that is, for any real sequence $(a_{n})$ there exist a subsequence $(a_{n_{k}})$ of $(a_{n})$ and a map $g : {\mathbb R} \times Y \rightarrow X$ such that $g(\cdot,y)\in L_{S}^{p(x)}({\mathbb R}:X)$ for all $y\in Y$ as well as that:
\begin{align*}
\lim_{k\rightarrow \infty}\Bigl \| f\bigl(t+a_{n_{k}}+\cdot,y\bigr) -g(t+\cdot,y)\Bigr \|_{L^{p(x)}[0,1]}  =0
\end{align*}
and
\begin{align*}
\lim_{k\rightarrow \infty}\Bigl \| g\bigl( t+\cdot-a_{n_{k}},y\bigr) -f(t+\cdot,y)\Bigr \|_{L^{p(x)}[0,1]} =0
\end{align*}
for each $ t\in {\mathbb R}$ and for each $y\in Y.$ We denote by $AAS^{p(x)}({\mathbb R} \times Y : X)$ the vector space consisting of all such functions.
\end{defn}

\begin{defn}\label{lot}
Let $p\in {\mathcal P}([0,1]).$ A function $f : [0,\infty)  \times Y \rightarrow X$
is said to be asymptotically $S^{p(x)}$-almost automorphic
iff $\hat{f}: [0,\infty)  \times Y \rightarrow  L^{p(x)}([0,1]:X)$ is asymptotically almost automorphic. The collection of such functions will be denoted by $AAAS^{p(x)}([0,\infty) \times Y : X).$
\end{defn}

The following well-known result of Fan et al. \cite{fan-et-ali} is reformulated here for Stepanov $p(x)$-almost automorphy:

\begin{thm}\label{zen-et-al}
Assume that $p\in {\mathcal P}([0,1]),$ and $f\in AAS^{p(x)}({\mathbb R} \times Y : X).$ If there exists a constant $L>0$ such that for all $x,\ y\in L^{p(x)}_{S}({\mathbb R} : Y)$
\begin{align*}
\Bigl\| f(t+\cdot,x(\cdot))-f(t+\cdot,y(\cdot))\Bigr \|_{L^{p(x)}([0,1]:X)} \leq L_{Y}\bigl\| x(\cdot)-y(\cdot)\bigr \|_{L^{p(x)}([0,1] : Y)}
\end{align*}
then for each $x\in AAS^{p}({\mathbb R} : Y)$ with relatively compact range in $Y$ one has that $f(\cdot,x(\cdot))\in AAS^{p}({\mathbb R} : X).$
\end{thm}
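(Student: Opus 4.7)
The strategy is a Cantor-diagonal extraction combined with a density/approximation argument, adapted from the classical composition theorem for Stepanov almost automorphic functions to the variable-exponent setting. (I read the conclusion $AAS^{p}$ as $AAS^{p(x)}$, consistent with the hypothesis.)

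First I would set up the subsequences. Given any real sequence $(b_{n})$, I would apply the $S^{p(x)}$-almost automorphy of $x(\cdot)$ to extract a subsequence $(a_{n}^{(0)})\subset(b_{n})$ and $\tilde{x}\in L_{S}^{p(x)}({\mathbb R}:Y)$ with $\|x(t+a_{n}^{(0)}+\cdot)-\tilde{x}(t+\cdot)\|_{L^{p(x)}[0,1]}\to 0$ for every $t\in{\mathbb R}$, together with the symmetric reverse limit. Since $R(x)$ has compact closure $K\subseteq Y$, I pick a countable dense set $\{y_{j}\}_{j\ge 1}\subseteq K$ and, invoking Stepanov $p(x)$-almost automorphy of each section $f(\cdot,y_{j})$, I diagonalize to a further subsequence $(a_{n})$ along which $\|f(t+a_{n}+\cdot,y_{j})-g(t+\cdot,y_{j})\|_{L^{p(x)}[0,1]}\to 0$ and the reverse limit holds, for every $j$ and every $t$, where $g(\cdot,y_{j})\in L_{S}^{p(x)}({\mathbb R}:X)$ is the corresponding automorphy companion.

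Next I would extend $g$ to the whole of $K$. Applied to constant $Y$-valued functions $u,v$, the Lipschitz hypothesis gives $\|f(t+\cdot,u)-f(t+\cdot,v)\|_{L^{p(x)}[0,1]}\le L\|u-v\|_{Y}\|\mathbf{1}\|_{L^{p(x)}[0,1]}$, and $\|\mathbf{1}\|_{L^{p(x)}[0,1]}$ is finite since $p\in{\mathcal P}([0,1])$. Passing to the limit $n\to\infty$ transfers this inequality to $g(t+\cdot,\cdot)$ on $\{y_{j}\}$, and uniform continuity uniquely extends $g(t,\cdot)$ to a Lipschitz map $K\to L^{p(x)}[0,1]$. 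After refining the subsequence once more to obtain $x(t+a_{n}+\cdot)\to\tilde{x}(t+\cdot)$ almost everywhere on $[0,1]$ (via an a.e.\ extraction from $L^{p(x)}$-convergence), the values of $\tilde{x}$ lie in $K$ a.e., so $G(t):=g(t,\tilde{x}(t))$ is a well-defined measurable $X$-valued function; this will serve as the almost automorphy companion of $F(t):=f(t,x(t))$.

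Now I would run the main estimate. Fix $t\in{\mathbb R}$ and $\varepsilon>0$; by compactness of $K$ and density of $\{y_{j}\}$, I choose a simple function $\eta_{\varepsilon}:[0,1]\to\{y_{j}\}_{j\ge 1}$ with $\|\tilde{x}(t+s)-\eta_{\varepsilon}(s)\|_{Y}<\varepsilon$ for a.e.\ $s\in[0,1]$, which forces $\|x(t+a_{n}+\cdot)-\eta_{\varepsilon}(\cdot)\|_{L^{p(x)}[0,1]}\le\varepsilon\|\mathbf{1}\|_{L^{p(x)}[0,1]}+o(1)$ as $n\to\infty$. Then I split
\begin{align*}
&\bigl\|F(t+a_{n}+\cdot)-G(t+\cdot)\bigr\|_{L^{p(x)}[0,1]}\\
&\quad\le \bigl\|f(t+a_{n}+\cdot,x(t+a_{n}+\cdot))-f(t+a_{n}+\cdot,\eta_{\varepsilon}(\cdot))\bigr\|_{L^{p(x)}[0,1]}\\
&\qquad +\bigl\|f(t+a_{n}+\cdot,\eta_{\varepsilon}(\cdot))-g(t+\cdot,\eta_{\varepsilon}(\cdot))\bigr\|_{L^{p(x)}[0,1]}\\
&\qquad +\bigl\|g(t+\cdot,\eta_{\varepsilon}(\cdot))-g(t+\cdot,\tilde{x}(t+\cdot))\bigr\|_{L^{p(x)}[0,1]}.
\end{align*}
The first and third summands are controlled by the Lipschitz hypothesis (and its limit version for $g$), each contributing $O(\varepsilon)$. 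The middle summand splits into finitely many pieces on which $\eta_{\varepsilon}$ is a constant $y_{j}$, and on each piece convergence to zero follows from the diagonal construction of step one. Letting $n\to\infty$ and then $\varepsilon\to 0$ yields the first required limit; the reverse limit $g(\cdot-a_{n},\tilde{x}(\cdot))\to F$ is obtained by an entirely symmetric argument.

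The main obstacle is the bookkeeping in the second step: one must know that $\tilde{x}$ really takes values in $K$ almost everywhere so that $G(t)=g(t,\tilde{x}(t))$ is unambiguous, and this is not automatic from $L^{p(x)}$-convergence. It has to be enforced by an additional subsequence extraction giving pointwise a.e.\ convergence (leveraging Lemma~\ref{aux}(iii), with an $L^{p(x)}$-bound on $x(\cdot)$ serving as the dominating function). Once this measure-theoretic point is dispatched, the simple-function approximation and the Lipschitz bound render the remainder routine.
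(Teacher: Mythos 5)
The paper does not actually prove Theorem \ref{zen-et-al}: it is stated as a reformulation of a known result of Fan, Liang and Xiao, with the proof implicitly deferred to \cite{fan-et-ali}. So there is no in-paper argument to compare against; judged on its own, your proof is the standard double-limit/diagonal argument from that source transplanted to the variable-exponent setting, and it is essentially sound. The extraction of a common subsequence for $x(\cdot)$ and for the countably many sections $f(\cdot,y_{j})$, the transfer of the Lipschitz bound to the limit function $g$ via constant (and then simple) test functions, and the three-term splitting with the middle term handled piecewise on the level sets of $\eta_{\varepsilon}$ all go through, because $\|\mathbf{1}\|_{L^{p(x)}[0,1]}\leq 1$ for every $p\in{\mathcal P}([0,1])$ and the hypothesis is assumed for arbitrary pairs in $L^{p(x)}_{S}({\mathbb R}:Y)$, which covers the translates of $x$ and the simple functions you insert.

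Two points deserve tightening. First, Lemma \ref{aux}(iii) is a dominated-convergence statement and is not the right tool for extracting almost everywhere convergence from $L^{p(x)}$-convergence; what you actually need is the embedding $L^{p(x)}([0,1]:Y)\hookrightarrow L^{1}([0,1]:Y)$ from Lemma \ref{aux}(ii), which gives convergence in measure on $[0,1]$ and hence an a.e.\ convergent sub-subsequence, after which closedness of $K$ forces $\tilde{x}(t+s)\in K$ a.e.\ (a diagonal over $t\in{\mathbb Z}$ suffices to cover ${\mathbb R}$ by unit windows). Second, since $g(t+\cdot,y)$ for $y\in K\setminus\{y_{j}\}$ is only an equivalence class obtained by Lipschitz extension, the cleanest way to make $G$ unambiguous is not pointwise composition but to define $G$ on each window $[t,t+1]$ as the $L^{p(x)}$-limit of $g(t+\cdot,\eta_{\varepsilon}(\cdot))$ as $\varepsilon\to 0$ (Cauchy by the limit Lipschitz estimate for simple functions), checking consistency on overlapping windows by uniqueness of limits; your third summand is then $O(\varepsilon)$ by construction rather than by an appeal to a pointwise-defined $g$. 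With these adjustments the argument is complete, including the symmetric reverse limit.
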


The following result generalizes that one established by Ding et al. \cite{ding-xll} (see e.g. \cite[pp. 134-138]{diagana}) . The proof is similar and therefore omitted: 

\begin{thm}\label{vcb-show}
Let $I={\mathbb R}$, and let $p\in {\mathcal P}([0,1]).$
Suppose that the following conditions hold:
\begin{itemize}
\item[(i)] $f \in AAS^{p(x)}(I \times Y : X)  $ and there exist a function $r\in {\mathcal P}([0,1])$ such that $ r(\cdot)\geq \max (p(\cdot), p(\cdot)/p(\cdot) -1)$ and a function $ L_{f}\in L_{S}^{r(x)}(I) $ such that:
\begin{align}\label{vbnmp}
\| f(t,x)-f(t,y)\| \leq L_{f}(t)\|x-y\|_{Y},\quad t\in I,\ x,\ y\in Y;
\end{align}
\item[(ii)] $u \in AAS^{p(x)} (I: Y),$ and there exists a set ${\mathrm E} \subseteq I$ with $m ({\mathrm E})= 0$ such that
$ K :=\{u(t) : t \in I \setminus {\mathrm E}\}$
is relatively compact in $Y;$ here, $m(\cdot)$ denotes the Lebesgue measure.
\end{itemize}
Define $q\in {\mathcal P}([0,1])$ by
$q(x):=p(x)r(x)/p(x)+r(x),$ if $x\in [0,1]$ and $r(x)<\infty,$ $q(x):=p(x),$ if $x\in [0,1]$ and $r(x)=\infty.$ Then $q(x)\in [1, p(x))$ for $x\in [0,1],$ $r(x)<\infty$ and $f(\cdot, u(\cdot)) \in AAS^{q(x)}(I : X).$
\end{thm}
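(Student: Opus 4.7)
My plan is to follow the Ding--Xll/Fan composition template (cf.\ \cite{ding-xll}, \cite{fan-et-ali}, \cite[pp.~134--138]{diagana}), now executed throughout in the Luxemburg norm on $[0,1]$, with the Lipschitz weight $L_{f} \in L_{S}^{r(x)}$ absorbed via the variable-exponent Hölder inequality (Lemma \ref{aux}(i)). The identity $1/q(x) = 1/p(x) + 1/r(x)$ is chosen precisely so that this bound applies. The range claim for $q$ is elementary: when $r(x) < \infty$, the inequality $q(x) < p(x)$ reduces to $p(x) > 0$, while $q(x) \geq 1$ is equivalent to $(p(x)-1)(r(x)-1) \geq 1$, i.e., $r(x) \geq p(x)/(p(x)-1)$, which is exactly the hypothesis on $r$.

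Given a real sequence $(s_{n}')$, I would fix a countable dense set $\{y_{j}\}_{j \in \mathbb{N}}$ of $\overline{K}$ and apply a Cantor diagonal extraction to obtain a single subsequence $(s_{n}) \subseteq (s_{n}')$ along which (a) $u$ satisfies both $S^{p(x)}$-a.a.\ limits with some limit $v \in L_{S}^{p(x)}(\mathbb{R} : Y)$, and (b) for every $j$, $f(\cdot, y_{j})$ satisfies both $S^{p(x)}$-a.a.\ limits at every rational $t$ with some limit $g(\cdot, y_{j})$. The Lipschitz bound \eqref{vbnmp} together with Lemma \ref{aux}(i) shows that $y \mapsto g(\cdot, y)$ extends uniquely and Lipschitzly from $\{y_{j}\}$ to $\overline{K}$ (with a weight inherited from $L_{f}$), so that $g(\cdot, v(\cdot))$ is well-defined and, by Lemma \ref{aux}(i) applied with exponents $r(x)$ and $p(x)$, lies in $L_{S}^{q(x)}(\mathbb{R} : X)$.

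The main step is then a three-term split. For fixed $t \in \mathbb{R}$ and $\varepsilon > 0$, cover $\overline{K}$ by finitely many balls $B(y_{j_{i}}, \varepsilon)$, $i = 1, \ldots, N$, and take measurable step approximations $\tilde{u}, \tilde{v}$ with values in $\{y_{j_{1}}, \ldots, y_{j_{N}}\}$, satisfying $\|u - \tilde{u}\|_{Y}, \|v - \tilde{v}\|_{Y} \leq \varepsilon$ a.e., and sharing a common level-set partition $\{A_{i}\}_{i=1}^{N}$. Then
\begin{align*}
& \bigl\| f\bigl(t+\cdot+s_{n}, u(t+\cdot+s_{n})\bigr) - g\bigl(t+\cdot, v(t+\cdot)\bigr) \bigr\|_{L^{q(x)}([0,1]:X)} \\
& \quad \leq \bigl\| \varepsilon L_{f}(t+\cdot+s_{n}) \bigr\|_{L^{q(x)}[0,1]} + \bigl\| \varepsilon L_{g}(t+\cdot) \bigr\|_{L^{q(x)}[0,1]} \\
& \qquad + \sum_{i=1}^{N} \bigl\| \chi_{A_{i}}(\cdot) \bigl[ f(t+\cdot+s_{n}, y_{j_{i}}) - g(t+\cdot, y_{j_{i}}) \bigr] \bigr\|_{L^{q(x)}([0,1]:X)},
\end{align*}
where $L_{g}$ is inherited from $L_{f}$ via the Stepanov limit. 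Lemma \ref{aux}(i) with exponents $r(x)$ and $p(x)$, combined with translation invariance of $L_{S}^{r(x)}$, bounds the first two terms by $C\varepsilon$ uniformly in $n$; each of the $N$ middle terms tends to $0$ as $n \to \infty$ because $f(\cdot, y_{j_{i}}) \in AAS^{p(x)}$. Letting $n \to \infty$ and then $\varepsilon \to 0^{+}$ yields the first $S^{q(x)}$-limit, and the symmetric limit with $-s_{n}$ follows by an identical argument.

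The main obstacle will be the bookkeeping of the step-function approximations: one must align the level-set partitions of $\tilde{u}$ and $\tilde{v}$ so that the middle sum genuinely reduces to a finite collection of $S^{p(x)}$-a.a.\ convergences at the fixed anchors $y_{j_{i}}$, and one must verify that the translated weight $L_{f}(\cdot+t+s_{n})$ is controlled in $L^{r(x)}[0,1]$ uniformly in $n$ (which is exactly translation invariance of $L_{S}^{r(x)}$). The remaining details are essentially the Ding--Xll argument transposed to the variable-exponent setting, which is presumably why the authors declare the proof "similar and therefore omitted".
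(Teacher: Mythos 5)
Your overall strategy is the right one: the paper gives no proof of Theorem \ref{vcb-show}, stating only that the argument is ``similar'' to the composition theorem of Ding, Liang and Xiao (\cite{ding-xll}, \cite[pp.~134--138]{diagana}) and therefore omitted, and your plan --- diagonal extraction over a countable dense subset of $\overline{K}$, the exponent identity $1/q(x)=1/p(x)+1/r(x)$ feeding into the variable-exponent H\"older inequality of Lemma \ref{aux}(i), the verification that $q(x)\in[1,p(x))$ via $(p(x)-1)(r(x)-1)\geq 1$, and the control of the translated weight by $\|L_{f}\|_{S^{r(x)}}$ --- is exactly the intended transposition of that argument to the variable-exponent setting.

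The one step that does not work as written is the three-term split. You approximate $u(t+\cdot+s_{n})$ by $\tilde{u}$ and $v(t+\cdot)$ by $\tilde{v}$, each within $\varepsilon$ a.e.\ and ``sharing a common level-set partition $\{A_{i}\}$'', and then claim the middle contribution is $\sum_{i}\bigl\|\chi_{A_{i}}[f(t+\cdot+s_{n},y_{j_{i}})-g(t+\cdot,y_{j_{i}})]\bigr\|$. That collapse to a \emph{common} anchor $y_{j_{i}}$ on each $A_{i}$ requires $\tilde{u}=\tilde{v}$, i.e.\ that $u(t+\cdot+s_{n})$ and $v(t+\cdot)$ lie within $2\varepsilon$ of one another a.e.\ on $[0,1]$; but the hypotheses only give $\|u(t+\cdot+s_{n})-v(t+\cdot)\|_{L^{p(x)}([0,1]:Y)}\rightarrow 0$, so for a fixed $n$ and fixed $\varepsilon$ no such alignment is available, and with mismatched anchors the middle sum contains cross terms $f(\cdot,y_{j_{i}})-g(\cdot,y_{j_{i'}})$ that need not vanish. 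The standard repair (and what Ding et al.\ actually do) is to insert the intermediate term $f(t+\cdot+s_{n},v(t+\cdot))$ first: the difference $f(t+\cdot+s_{n},u(t+\cdot+s_{n}))-f(t+\cdot+s_{n},v(t+\cdot))$ is dominated pointwise by $L_{f}(t+\cdot+s_{n})\|u(t+\cdot+s_{n})-v(t+\cdot)\|_{Y}$, hence by Lemma \ref{aux}(i) its $L^{q(x)}$-norm is at most $2\|L_{f}\|_{S^{r(x)}}\,\|u(t+\cdot+s_{n})-v(t+\cdot)\|_{L^{p(x)}}\rightarrow 0$; only \emph{after} this reduction does one approximate the single function $v(t+\cdot)$ by a step function $\tilde{v}$ and run your remaining two $\varepsilon$-terms plus the finite sum of single-anchor $S^{p(x)}$-limits (converted to $L^{q(x)}$ on the finite-measure set $[0,1]$ via Lemma \ref{aux}(ii)). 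With that four-term split in place of your three-term one, the rest of your outline --- including the Fatou-type argument producing the Lipschitz weight $L_{g}$ for the limit function --- goes through.
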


Concerning asymptotical two-parameter Stepanov $p(x)$-almost automorphy, we can deduce the following composition principle with $X=Y;$ see the proofs of \cite[Proposition 2.7.3, Proposition 2.7.4]{nova-mono} for more details:

\begin{prop}\label{bibl}
Let $I =[0,\infty),$ and let $p\in {\mathcal P}([0,1]).$
Suppose that the following conditions hold:
\begin{itemize}
\item[(i)] $g \in AAS^{p(x)}(I \times X : X)  ,$ there exist a function $r\in {\mathcal P}([0,1])$ such that $ r(\cdot)\geq \max (p(\cdot), p(\cdot)/p(\cdot) -1)$ and a function $ L_{g}\in L_{S}^{r(x)}(I) $ such that \eqref{vbnmp} holds with the function $f(\cdot, \cdot)  $ replaced by the function $g(\cdot, \cdot)  $ therein.
\item[(ii)] $v \in AAS^{p(x)}(I:X),$ and there exists a set ${\mathrm E} \subseteq I$ with $m ({\mathrm E})= 0$ such that
$ K =\{v(t) : t \in I \setminus {\mathrm E}\}$
is relatively compact in X.
\item[(iii)] $f(t,x)=g(t,x)+q(t,x)$ for all $t\geq 0$ and $x\in X,$ where $\hat{q}\in C_{0}([0,\infty) \times X : L^{q(x)}([0,1]:X))$
with $q(\cdot)$ defined as above;
\item[(iv)] $u(t)=v(t)+\omega(t) $ for all $t\geq 0,$ where $\hat{\omega}\in C_{0}([0,\infty) : L^{p(x)}([0,1]:X)).$
\item[(v)]  There exists a set $E' \subseteq I$ with $m (E')= 0$ such that
$ K' =\{u(t) : t \in I \setminus E'\}$
is relatively compact in $ X.$
\end{itemize}
Then $f(\cdot, u(\cdot)) \in AAAS^{q(x)}(I : X).$
\end{prop}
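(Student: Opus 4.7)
The plan is to decompose $f(\cdot,u(\cdot))$ into a Stepanov $q(x)$-almost automorphic part plus an ergodic (vanishing) remainder, and then verify each piece using the ingredients provided by the hypotheses. The natural decomposition is
$$f(t,u(t)) = g(t,v(t)) + \bigl[ g(t,u(t)) - g(t,v(t)) \bigr] + q(t,u(t)),\quad t\geq 0,$$
where $g(\cdot,v(\cdot))$ is to play the role of the Stepanov $q(x)$-almost automorphic component (as in Definition~\ref{gaston-toka}) and the bracket plus $q(t,u(t))$ jointly form the remainder whose hat should lie in $C_0([0,\infty):L^{q(x)}([0,1]:X))$.

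For the first piece, I would apply Theorem~\ref{vcb-show} directly to the pair $(g,v)$: hypothesis~(i) supplies the $S^{p(x)}$-almost automorphy of $g$ and the Lipschitz-type majorant $L_g\in L_S^{r(x)}$, while (ii) supplies the $S^{p(x)}$-almost automorphy of $v$ together with the relative compactness of its range. Theorem~\ref{vcb-show} then yields $g(\cdot,v(\cdot))\in AAS^{q(x)}(\mathbb{R}:X)$ with precisely the exponent $q(x)$ appearing in (iii). For the third piece $q(\cdot,u(\cdot))$, hypothesis~(iii) gives $\hat q\in C_0([0,\infty)\times X:L^{q(x)}([0,1]:X))$ and hypothesis~(v) gives a precompact range $K'$ for $u$; since $\hat q(t,y)\to 0$ in $L^{q(x)}$ uniformly for $y$ in the compact $\overline{K'}$, the evaluation $t\mapsto \widehat{q(\cdot,u(\cdot))}(t)$ vanishes at infinity, and continuity follows from joint continuity of $\hat q$ together with continuity of $u$. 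For the bracketed middle piece, I would invoke the variable-exponent H\"older inequality (Lemma~\ref{aux}(i)) with exponents $p(x),r(x),q(x)$ satisfying $1/q(x) = 1/p(x) + 1/r(x)$ (the choice of $r(x)\geq \max(p(x),p(x)/(p(x)-1))$ guarantees $q(x)\geq 1$): for each $t\geq 0$,
\begin{align*}
\bigl\| g(t+\cdot,u(t+\cdot)) - g(t+\cdot,v(t+\cdot)) \bigr\|_{L^{q(x)}[0,1]}
&\leq 2\,\|L_g(t+\cdot)\|_{L^{r(x)}[0,1]}\cdot \|u(t+\cdot)-v(t+\cdot)\|_{L^{p(x)}[0,1]} \\
&\leq 2\,\|L_g\|_{S^{r(x)}}\cdot \|\hat\omega(t)\|_{L^{p(x)}[0,1]},
\end{align*}
and the right-hand side tends to zero by (iv). The case $r(x)=\infty$ (where $q(x)=p(x)$) is handled analogously with the corresponding H\"older estimate. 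Assembling the three pieces produces the required decomposition of $f(\cdot,u(\cdot))$ and places it in $AAAS^{q(x)}(I:X)$.

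The main obstacle I expect is twofold. First, the careful bookkeeping of the variable-exponent H\"older inequality must be verified pointwise in $x\in[0,1]$, with the borderline case $r(x)=\infty$ treated separately but consistently with the definition of $q(x)$ in Theorem~\ref{vcb-show}; this is where the condition $r(\cdot)\geq\max(p(\cdot),p(\cdot)/(p(\cdot)-1))$ is essential. Second, and more subtly, I must verify not only decay but genuine continuity of the hats $t\mapsto \widehat{g(\cdot,u(\cdot))-g(\cdot,v(\cdot))}(t)$ and $t\mapsto \widehat{q(\cdot,u(\cdot))}(t)$ as $L^{q(x)}([0,1]:X)$-valued maps. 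For the bracket, continuity is controlled by the Lipschitz bound together with continuity of $t\mapsto \hat u(t)$ and $t\mapsto \hat v(t)$ into $L^{p(x)}$; for the $q$-term, it reduces to uniform continuity of $\hat q(t,\cdot)$ on the compact range of $u$ combined with continuity of $u$. Once these verifications are made, the result follows in parallel with the proofs of \cite[Proposition~2.7.3, Proposition~2.7.4]{nova-mono}.
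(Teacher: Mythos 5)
This is essentially the argument the paper has in mind: it offers no proof of Proposition \ref{bibl} beyond deferring to \cite[Proposition 2.7.3, Proposition 2.7.4]{nova-mono}, and those proofs rest on exactly your decomposition $f(t,u(t))=g(t,v(t))+[g(t,u(t))-g(t,v(t))]+q(t,u(t))$, with Theorem \ref{vcb-show} applied to the principal pair $(g,v)$ and Lemma \ref{aux}(i) giving the bound $2\|L_{g}\|_{S^{r(x)}}\|\hat{\omega}(t)\|_{L^{p(x)}[0,1]}$ for the middle term. The one step you should not present as immediate is the decay of the third term: knowing $\|\hat{q}(t,y)\|_{L^{q(x)}([0,1]:X)}\to 0$ uniformly for $y$ in the compact $\overline{K'}$ does not by itself yield $\|q(t+\cdot,u(t+\cdot))\|_{L^{q(x)}[0,1]}\to 0$, because in the latter the second argument moves with the integration variable, $s\mapsto u(t+s)$, rather than staying at a fixed $y$. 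The cited proofs close this by a finite $\delta$-net of $\overline{K'}$ combined with an equicontinuity estimate in the second variable (or by a pointwise majorant $\sup_{y\in\overline{K'}}\|q(s,y)\|$); your write-up should make explicit which device it uses, since $q=f-g$ is not itself assumed Lipschitz in the second variable.
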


\section{Generalized (asymptotical) almost automorphy in Lebesgue spaces with variable exponents
$L^{p(x)}:$ actions of convolution products and some applications}\label{raske}

We start this section by stating
the following generalization of \cite[Proposition 5]{element}  (the reflexion at zero keeps the spaces of Stepanov $p$-almost automorphic functions unchanged, which may or may not be the case with the spaces of Stepanov $p(x)$-almost automorphic functions):

\begin{prop}\label{ravi-and-variable}
Suppose that $p\in D_{+}([0,1]),$ $q\in {\mathcal P}([0,1]),$ $1/p(x) +1/q(x)=1$
and $(R(t))_{t> 0}\subseteq L(X,Y)$ is a strongly continuous operator family satisfying that
$M:=\sum_{k=0}^{\infty}\|R(\cdot +k)\|_{L^{q(x)}[0,1]}<\infty .$ If $\check{g} : {\mathbb R} \rightarrow X$ is $S^{p(x)}$-almost automorphic, then the function $G: {\mathbb R} \rightarrow Y,$ given by
\begin{align}\label{wer}
G(t):=\int^{t}_{-\infty}R(t-s)g(s)\, ds,\quad t\in {\mathbb R},
\end{align}
is well-defined and almost automorphic.
\end{prop}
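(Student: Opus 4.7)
I would split the improper integral into pieces $\int_{0}^{1}R(s+k)g(t-s-k)\,ds$ for $k\in\mathbb{N}_{0}$ and estimate each one by H\"older's inequality for variable Lebesgue spaces (Lemma~\ref{aux}(i)) with the conjugate pair $p(\cdot),q(\cdot)$. The reflection identity $g(t-s-k)=\check{g}(s+k-t)$ (valid for $s\in[0,1]$) converts $\|g(t-\cdot-k)\|_{L^{p(x)}[0,1]}$ into $\|\check{g}(\cdot+(k-t))\|_{L^{p(x)}[0,1]}$, which is bounded by $\|\check{g}\|_{S^{p(x)}}$ uniformly in $k$ and $t$. Together with the hypothesis $M<\infty$ this gives well-definedness of $G$ and the uniform bound $\|G(t)\|\leq 2M\|\check{g}\|_{S^{p(x)}}$; this reflection is precisely the reason the hypothesis is formulated for $\check{g}$ rather than $g$.

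For almost automorphy, let $(b_{n})$ be a given real sequence. The key move is to apply the $S^{p(x)}$-almost automorphy of $\check{g}$ not to $(b_{n})$ but to the reflected sequence $(-b_{n})$: this produces a subsequence $(a_{n})$ of $(b_{n})$ and a function $\tilde{h}\in L_{S}^{p(x)}({\mathbb R}:X)$ such that, for every $\tau\in{\mathbb R}$,
\begin{align*}
\lim_{n\to\infty}\bigl\|\check{g}(\cdot-a_{n}+\tau)-\tilde{h}(\cdot+\tau)\bigr\|_{L^{p(x)}[0,1]}=0,\quad \lim_{n\to\infty}\bigl\|\tilde{h}(\cdot+a_{n}+\tau)-\check{g}(\cdot+\tau)\bigr\|_{L^{p(x)}[0,1]}=0.
\end{align*}
Setting $h(s):=\tilde{h}(-s)$ and $H(t):=\int_{-\infty}^{t}R(t-s)h(s)\,ds$, the same H\"older decomposition gives
\begin{align*}
\|G(t+a_{n})-H(t)\|\leq \sum_{k=0}^{\infty}2\|R(\cdot+k)\|_{L^{q(x)}[0,1]}\bigl\|\check{g}(\cdot-a_{n}+(k-t))-\tilde{h}(\cdot+(k-t))\bigr\|_{L^{p(x)}[0,1]}.
\end{align*}
Each summand tends to $0$ (apply the first limit with $\tau=k-t$) and is dominated by $2\|R(\cdot+k)\|_{L^{q(x)}[0,1]}(\|\check{g}\|_{S^{p(x)}}+\|\tilde{h}\|_{S^{p(x)}})$, a summable expression independent of $n$. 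Dominated convergence for series yields $G(t+a_{n})\to H(t)$ pointwise; a symmetric computation using the second limit above gives $H(t-a_{n})\to G(t)$ pointwise.

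Since the Section~\ref{section2} definition of almost automorphy demands $G\in C_{b}({\mathbb R}:X)$, continuity of $G$ has to be verified separately. I would argue it by the same dominated-convergence scheme: for $t_{n}\to t$ the termwise convergence $\|g(t_{n}-\cdot-k)-g(t-\cdot-k)\|_{L^{p(x)}[0,1]}\to 0$ follows from the continuity of translation on $L^{p(x)}([0,1]:X)$, which is available under the assumption $p\in D_{+}([0,1])$ since then $L^{p(x)}=E^{p(x)}$ and continuous functions are dense there, while the uniform dominant $4\|R(\cdot+k)\|_{L^{q(x)}[0,1]}\|\check{g}\|_{S^{p(x)}}$ is summable. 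The step I expect to require the most care is keeping the signs straight: one must apply the almost automorphy of $\check{g}$ to $(-b_{n})$ rather than to $(b_{n})$, so that the translation $\check{g}(\cdot-a_{n}+\tau)$ produced by the substitution $g=\check{g}(-\cdot)$ inside the convolution is exactly the quantity controlled by \eqref{ne-mu-je-stepanov-p(x)}. Once this bookkeeping is set up correctly, the rest is a routine assembly of H\"older's inequality and dominated convergence for series.
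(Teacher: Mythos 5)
Your proof is correct and takes essentially the same route as the paper, which simply defers to the argument of \cite[Proposition 5]{element}: unit-interval decomposition of the convolution, the variable-exponent H\"older inequality, dominated convergence for the resulting series, and continuity of $G$ via continuity of the translation map into $L^{p(x)}([0,1]:X)$, which holds under $p\in D_{+}([0,1])$. Your explicit bookkeeping with the reflection (applying the $S^{p(x)}$-almost automorphy of $\check{g}$ to the sequence $(-b_{n})$) is precisely the detail the cited argument relies on, so nothing is missing.
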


\begin{proof}
The proof of theorem is very similar to that of above-mentioned proposition since the H\"older inequality holds in our framework (see Lemma \ref{aux}(ii)) and any element of $L^{p(x)}([0,1]:X)$ is absolutely continuous with respect to the norm $\|\cdot\|_{L^{p(x)}}$ (see \cite[Definition 1.12, Theorem 1.13]{fan-zhao}), which clearly implies that the translation mapping $t\mapsto \check{g}(\cdot -t)\in L^{p(x)}([0,1] : X),$ $t\in {\mathbb R}$ is continuous (we need this fact for proving the continuity of mapping $F_{k}(\cdot)$ appearing in the proof of \cite[Proposition 5]{element}, $k\in {\mathbb N}$). The remaining part of proof can be given by copying the corresponding part of proof of above-mentioned proposition. 
\end{proof}

In general case $p\in {\mathcal P}([0,1]),$ there are elements of $L^{p(x)}([0,1])$ that are not absolutely continuous with respect to the norm $\|\cdot\|_{L^{p(x)}};$ see e.g. \cite[p. 602]{rakosnik}. In this case, Proposition \ref{ravi-and-variable} continues to hold if we impose the condition on continuity of mapping $t\mapsto \check{g}(\cdot -t)\in L^{p(x)}([0,1] : X),$ $t\in {\mathbb R}$ in place of condition $p\in D_{+}([0,1]).$

Proposition \ref{ravi-and-variable} can be simply incorporated in the study of existence and uniqueness of almost periodic solutions of the following abstract Cauchy differential inclusion of first order
\begin{align}\label{decko-leftt}
u^{\prime}(t)\in {\mathcal A}u(t)+g(t),\quad t\in {\mathbb R}
\end{align}
and the following abstract Cauchy relaxation differential inclusion
\begin{align}\label{decko-left}
D_{t,+}^{\gamma}u(t)\in -{\mathcal A}u(t)+g(t),\ t\in {\mathbb R},
\end{align}
where ${\mathcal A}$ is an MLO satisfying the condition (P), $D_{t,+}^{\gamma}$ denotes the Weyl-Liouville fractional derivative of order $\gamma \in (0,1)$ and $g: {\mathbb R} \rightarrow X$ satisfies certain assumptions; see \cite{toka-marek} and \cite{nova-mono} for further information in this direction.

In the following proposition, we state some invariance properties of generalized asymptotical almost automorphy in Lebesgue spaces with variable exponents
$L^{p(x)}$ under the action of finite convolution products. This proposition generalizes \cite[Proposition 6]{element} provided that $p>1$ in its formulation.

\begin{prop}\label{stepanov-almost-automorphy-p(x)}
Suppose that $ p,\ q\in C_{+}([0,1]),$ $1/p(x) +1/q(x)=1$
and $(R(t))_{t> 0}\subseteq L(X)$ is a strongly continuous operator family satisfying that, for every
$t\geq 0,$ we have that
$m_{t}:=\sum_{k=0}^{\infty}\|R(\cdot +t+k)\|_{L^{q(x)}[0,1]}<\infty .$
Suppose, further, that $\check{g} : {\mathbb R} \rightarrow X$ is $S^{p(x)}$-almost automorphic, 
$q\in L_{S}^{p(x)}( [0,\infty) :X)$ and $f(t)=g(t)+q(t),$ $t\geq 0.$ Let $r_{1},\ r_{2}\in {\mathcal P}([0,1])$ and
the following hold:
\begin{itemize}
\item[(i)]  For every $t\geq 0$, the mapping $x\mapsto \int^{t+x}_{0}R(t+x-s) q(s)\, ds,$ $x\in [0,1]$ belongs to the space $ L^{r_{1}(x)}([0,1] : X) $ and we have
\begin{align}\label{lep-primer}
\lim_{t\rightarrow +\infty}\Biggl\| \int^{t+x}_{0}R(t+x-s) q(s)\, ds\Biggr\|_{L^{r_{1}(x)}[0,1]}=0.
\end{align}
\item[(ii)] For every $t\geq 0,$ the mapping  $x\mapsto m_{t+x},$ $x\in [0,1]$ belongs to the space $L^{r_{2}(x)}[0,1]$ and we have
$$
\lim_{t\rightarrow +\infty}\bigl| m_{t+x}\bigr|_{L^{r_{2}(x)}[0,1]}=0.
$$
\end{itemize}
Then the function $H(\cdot),$ given by
\begin{align*}
H(t):=\int^{t}_{0}R(t-s)f(s)\, ds,\quad t\geq 0,
\end{align*}
is well-defined, bounded and belongs to the class $AAS^{p(x)}({\mathbb R} : X)+S^{r_{1}(x)}_{0}([0,\infty):X) + S^{r_{2}(x)}_{0}([0,\infty):X),$ with the meaning clear.
\end{prop}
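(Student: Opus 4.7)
The plan is to split $H$ into three pieces: an $S^{p(x)}$-almost automorphic main term on $\RR$, an $S_{0}^{r_{1}(x)}$-piece coming from the defect $q$, and an $S_{0}^{r_{2}(x)}$-piece coming from truncating the convolution at $0$. Starting from $f=g+q$ I obtain $H(t)=H_{g}(t)+H_{q}(t)$, where $H_{g}(t):=\int_{0}^{t}R(t-s)g(s)\,ds$ and $H_{q}(t):=\int_{0}^{t}R(t-s)q(s)\,ds$, and I then further decompose $H_{g}(t)=G(t)-E(t)$, where $G(t):=\int_{-\infty}^{t}R(t-s)g(s)\,ds$ is the full two-sided convolution and $E(t):=\int_{-\infty}^{0}R(t-s)g(s)\,ds$ collects the missing past.

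The main $G$-term is controlled by Proposition \ref{ravi-and-variable}: since $p\in C_{+}([0,1])\subseteq D_{+}([0,1])$ and $m_{0}<\infty$ is precisely the summability condition needed there, $G:\RR\to X$ is well-defined and almost automorphic. Because $p$ is essentially bounded, the condition \eqref{salenjak-auto} holds trivially, and Proposition \ref{propa} therefore lifts $G$ to $AAS^{p(x)}(\RR:X)$.

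For the error $E$, the substitution $v=-s$ (so that $\check g(v)=g(-v)$) gives $E(t+x)=\int_{0}^{\infty}R(t+x+v)\check g(v)\,dv$ for $t\geq 0$ and $x\in[0,1]$. Splitting this integral as $\sum_{k=0}^{\infty}\int_{k}^{k+1}$ and applying the variable-exponent H\"older inequality from Lemma \ref{aux}(i) with the conjugate pair $1/p(x)+1/q(x)=1$ yields
\begin{align*}
\|E(t+x)\|\leq 2\sum_{k=0}^{\infty}\bigl\|R(\cdot+t+x+k)\bigr\|_{L^{q(x)}[0,1]}\bigl\|\check g(k+\cdot)\bigr\|_{L^{p(x)}([0,1]:X)}\leq 2\|\check g\|_{S^{p(x)}}\,m_{t+x}.
\end{align*}
By monotonicity of the Luxemburg norm this transfers to $\|\widehat E(t)\|_{L^{r_{2}(x)}[0,1]}\leq 2\|\check g\|_{S^{p(x)}}\|m_{t+\cdot}\|_{L^{r_{2}(x)}[0,1]}$, which tends to $0$ as $t\to\infty$ by hypothesis (ii); hence $E\in S_{0}^{r_{2}(x)}([0,\infty):X)$.

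The remaining $H_{q}$-term is handled directly by hypothesis (i), which is exactly the statement $\|\widehat{H_{q}}(t)\|_{L^{r_{1}(x)}[0,1]}\to 0$, so $H_{q}\in S_{0}^{r_{1}(x)}([0,\infty):X)$. The residual verifications are the continuity of the shift maps $t\mapsto\widehat E(t),\widehat{H_{q}}(t)$ into the respective variable-exponent Lebesgue spaces (needed for membership in the $C_{0}$-classes), which I would obtain from the strong continuity of $(R(t))_{t>0}$ together with dominated convergence applied via the envelopes just produced, and the pointwise boundedness of $H$, which follows by combining the sup-norm boundedness of the almost automorphic function $G$ with H\"older-type estimates on $H_{q}$ and $E$ entirely analogous to the one above. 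I expect the principal obstacle to be Step 3: carefully carrying out the pointwise H\"older domination in the variable-exponent framework with the correct multiplicative constant, and then checking that this pointwise bound transfers through the Luxemburg norm in such a way that hypothesis (ii) supplies the required quantitative convergence to $0$.
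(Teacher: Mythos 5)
Your proposal is correct and follows essentially the same route as the paper's proof: the decomposition $H=G+F_{1}+F_{2}$ with $F_{1}=H_{q}$ and $F_{2}=-E$, the appeal to Proposition \ref{ravi-and-variable} for the two-sided convolution, the H\"older/Luxemburg estimate $\|E(t+x)\|\leq 2\|\check{g}\|_{S^{p(x)}}m_{t+x}$ feeding hypothesis (ii), and hypothesis (i) handling $F_{1}$ directly. The only point you leave sketchier than the paper is the continuity of $\hat{F_{1}},\hat{F_{2}}$ (the paper uses the absolute continuity of the $L^{q(x)}$-norm for $q\in C_{+}([0,1])$ here), while conversely you make explicit, via Proposition \ref{propa} and \eqref{salenjak-auto}, the upgrade of $G$ from almost automorphic to $S^{p(x)}$-almost automorphic, which the paper leaves implicit.
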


\begin{proof}
Define $G(\cdot)$ by \eqref{wer} and $F(\cdot)$ by
\begin{align*}
F(t):=\int^{t}_{0}R(t-s)q(s)\, ds-\int^{\infty}_{t}R(s)g(t-s)\, ds:=F_{1}(t)+F_{2}(t),\quad t\geq 0.
\end{align*}
It can be simply shown that the function 
$F_{1}(\cdot)$ is well-defined and bounded because $q(\cdot)$ is $S^{p(x)}$-bounded and 
$m_{0}<\infty ;$ 
cf. the proof of \cite[Proposition 3.14]{toka-marek}. Furthermore, the integral $\int^{\infty}_{t}R(s)g(t-s)\, ds=F_{2}(t)$ is well-defined for all $ t\geq 0,$ which follows by applying Lemma \ref{aux}(ii):
\begin{align*}
\Biggl\| \int^{\infty}_{t}R(s)& g(t-s)\, ds\Biggr\|=\Biggl\| \int^{\infty}_{t}R(s) \check{g}(s-t)\, ds\Biggr\|
\\ & \leq \sum_{k=0}^{\infty}\int^{t+k+1}_{t+k}\| R(s) \| \| \check{g}(s-t)\|\, ds
\\&  =\sum_{k=0}^{\infty}\int^{1}_{0}\|R(s+t+k)\| \| \check{g}(s+k)\|\, ds
\\ & \leq 2 \bigl\|\check{g}\bigr\|_{L^{p(x)}_{S}({\mathbb R} : X)}\sum_{k=0}^{\infty}\|R(t+k+\cdot)\|_{L^{q(x)}[0,1]}
\\& \leq 2 \bigl\|\check{g}\bigr\|_{L^{p(x)}_{S}({\mathbb R} : X)}m_{t}<\infty,\quad t\geq 0.
\end{align*} 
Since $H(t)=G(t)+F(t)$ for all $ t\geq 0,$
we get that the function $H(\cdot)$ is well-defined and bounded;
due to Proposition \ref{ravi-and-variable}, it remains to be shown that the mapping $\hat{F_{i}}: [0,\infty) \rightarrow L^{r_{i}(x)} ([0,1] :X)$ is in class $C_{0}([0,\infty) : L^{r_{i}(x)}([0,1]:X))$ for $i=1,2.$ 
Let $k\in {\mathbb N}_{0}.$ For the continuity of mapping $t\mapsto F_{k,2}(t):=\int^{t+k+1}_{t+k}R(s)g(t-s)\, ds$, $t\geq 0,$ 
let us assume that $(t_{n})$ is a sequence of positive reals converging to some fixed number $t\geq 0.$ 
Having in mind Lemma \ref{aux}(ii), we obtain that
\begin{align*}
\bigl\| F_{k,2}(t_{n})-F_{k,2}(t)\bigr\|\leq 2\bigl\|\check{g}\bigr\|_{L^{p(x)}_{S}({\mathbb R} : X)}\bigl\| R(t_{n}+k+\cdot)-R(t+k+\cdot)\bigr\|_{L^{q(x)}[0,1]},\ n\in {\mathbb N},
\end{align*}
so that the claimed assertion follows by applying \cite[Theorem 1.13]{fan-zhao} (observe that we need the condition $ p,\ q\in C_{+}([0,1])$ here).
Using the condition $m_{t}<\infty$ as well as  
the Weierstrass criterion (see also the proof of \cite[Proposition 5]{element}), we get that the mapping $F_{2}(\cdot)$ is continuous. The continuity of mapping $t\mapsto \int^{t}_{0}R(t-s)q(s)\, ds,$ $t\geq 0$ can be shown similarly. By \cite[Proposition 3.7(iii)]{toka-marek}, we get that the mapping  $\hat{F_{i}}: [0,\infty) \rightarrow L^{r_{i}(x)} ([0,1] :X)$ is continuous for $i=1,2.$
Taking into account (i)-(ii) and the computation used above  for proving the boundedness of function $F_{2}(\cdot)$, we easily get that $\lim_{t\rightarrow +\infty}\|F_{i}(t+\cdot)\|_{ L^{r_{i}(x)} ([0,1] :X)}=0$ for $i=1,2.$
The proof of the proposition is thereby complete.
\end{proof}

The next example exhibits the use of ergodic Stepanov components with variable exponents (see also Example \ref{illust} below): 

\begin{example}\label{gotyie}
Suppose that $p(x)\equiv r_{2}(x)\equiv p>1,$ $q(x)\equiv p/p-1 >1$ ($x\in [0,1]$), $r_{1}\in {\mathcal P}([0,1])$,
$(R(t))_{t\geq 0}$ is strongly continuous, exponentially decaying, $g : {\mathbb R} \rightarrow X$ is $S^{p}$-almost periodic, $q : [0,\infty) \rightarrow X$ is $S^{p}$-bounded, \eqref{lep-primer} holds
but 
\begin{align*}
\lim_{t\rightarrow +\infty}\Biggl\| \int^{t+x}_{0}R(t+x-s) q(s)\, ds\Biggr\|_{L^{p}[0,1]}\neq 0.
\end{align*}
Then the function $H(\cdot)$ is bounded and belongs to the class $AAAS^{p(x)}([0,\infty) : X)+S^{r_{1}(x)}_{0}([0,\infty):X)$ but not to the class $AAAS^{p(x)}([0,\infty) : X);$ see also \cite[Remark 2.14(i)]{EJDE}.
\end{example}

We can simply apply Proposition \ref{stepanov-almost-automorphy-p(x)} in the analysis of existence and uniqueness of asymptotically $S^{r(x)}$-almost automorphic solutions for a wide class of abstract Volterra integro-differential equations and inclusions. For example,
Proposition \ref{stepanov-almost-automorphy-p(x)} is applicable in the analysis of asymptotically $S^{r(x)}$-almost automorphic solutions of the following
abstract integro-differential inclusion:
\begin{align*}
\Bigl[u(\cdot)&- \bigl(
g_{\zeta+1+i}\ast f\bigr)(\cdot)Cx\Bigr]  
\\ & +\sum
\limits_{j=1}^{n-1}c_{j}g_{\alpha_{n}-\alpha_{j}}\ast \Bigl[
u(\cdot)- \bigl(g_{\zeta+1+i}\ast f\bigr)(\cdot)Cx \Bigr]
\\ &+\sum \limits_{j\in {{\mathbb N}_{n-1}}\setminus D_{i}}c_{j}\bigl[g_{\alpha_{n}-\alpha_{j}+i+\zeta+1}\ast
f\bigr](\cdot)Cx \in
{\mathcal A}\bigl[g_{\alpha_{n}-\alpha}\ast u\bigr](\cdot),
\end{align*}
where $\zeta \geq 0$ is appropriately chosen, $C\in L(X)$ commutes with ${\mathcal A},$ $c_{j}\geq 0$ for $1\leq j\leq n-1,$ $0 \leq \alpha_{1}<\cdot \cdot
\cdot<\alpha_{n},$ $0\leq \alpha<\alpha_{n}$ and $f(\cdot)$ 
satisfies the requirements of Proposition \ref{stepanov-almost-automorphy-p(x)} (cf. \cite{gaston-marko} for the notion of sets $D_{i}$ and more details on the subject). 

In what follows, we will briefly explain how one can apply Proposition \ref{stepanov-almost-automorphy-p(x)} in the study of qualitative analysis of solutions of the following fractional relaxation inclusion
\[
\hbox{(DFP)}_{f,\gamma} : \left\{
\begin{array}{l}
{\mathbf D}_{t}^{\gamma}u(t)\in {\mathcal A}u(t)+f(t),\ t> 0,\\
\quad u(0)=x_{0},
\end{array}
\right.
\]
where ${\mathbf D}_{t}^{\gamma}$ denotes the Caputo fractional derivative of order $\gamma \in (0,1],$ $x_{0}\in X$ and
$f : [0,\infty) \rightarrow X$ satisfies certain properties.
Let $(S_{\gamma}(t))_{t>0}$ and $(P_{\gamma}(t))_{t>0}$ be the operator families defined in \cite{toka-marek}. Then we have
the existence of two finite constants $M_{1}>0$ and $M_{2}>0$ such that
\begin{align}\label{debil}
\bigl\| S_{\gamma}(t) \bigr\|+\bigl\| P_{\gamma}(t) \bigr\|\leq M_{1}t^{\gamma (\beta-1)},\quad t>0
\end{align}
and
\begin{align}\label{debil-prim}
\bigl\| S_{\gamma}(t) \bigr\|\leq M_{2}t^{-\gamma},\ t\geq 1, \ \ \mbox{   }\ \ \bigl\| P_{\gamma}(t) \bigr\|\leq M_{2}t^{-2\gamma },\ t\geq 1.
\end{align}
Set $R_{\gamma}(t):= t^{\gamma -1}P_{\gamma}(t),$ $t>0.$
By a mild solution of (DFP)$_{f,\gamma},$ we mean any function $u\in C([0,\infty) : X)$ satisfying that
$$
u(t)=S_{\gamma}(t)x_{0}+\int^{t}_{0}R_{\gamma}(t-s)f(s)\, ds,\quad t\geq 0.
$$
The estimates \eqref{debil}-\eqref{debil-prim} and the representation formula for $u(\cdot)$ are crucial for applications of Proposition \ref{stepanov-almost-automorphy-p(x)}. We provide below an 
illustrative example:

\begin{example}\label{illust}
Let $x_{0}\in X$ belong to the domain of continuity of $(T(t))_{t>0},$ i.e., $\lim_{t\rightarrow 0+}T(t)x=x.$ Then we know \cite{nova-mono} that $\lim_{t\rightarrow 0+}S_{\gamma}(t)x=x$ so that the mapping $t\mapsto S_{\gamma}(t)x,$ $t\geq 0$ is continuous and tends to zero as $t\rightarrow +\infty.$ Let $p,\ q\in (1,\infty),$ and let $q(\gamma \beta -1)>-1.$ Assume that $p(x)\equiv r_{2}(x)\equiv p$ ($x\in [0,1]$) and $r_{1}\in {\mathcal P}([0,1]).$ Then $\|R(\cdot)\|_{L^{q}[0,1]}<\infty$ and the computation similar to that one established in \cite[Remark 2.14(ii)]{EJDE} shows that $m_{t}<\infty$ for all $t\geq 0$ as well as that the mapping $t\mapsto m_{t},$ $t\geq 0$ is continuous and satisfies $m_{t}\leq \mbox{Const.}t^{\nu (-1-\gamma)},$ $t\geq 1,$ where $\nu \in (0,1)$ is chosen so that $(1-\nu)(1+\gamma)>1.$ By Lemma \ref{lepo}, we get $\hat{m_{\cdot}}\in C_{0}([0,\infty) : L^{r_{2}(x)}([0,1] : X)).$ Writing the first integral in (i) of Proposition \ref{stepanov-almost-automorphy-p(x)} as $\int^{t+x}_{0}R(t+x-s) q(s)\, ds=\int^{1}_{0}R(s) q(t+x-s)\, ds+\int^{t+x}_{1}R(s) q(t+x-s)\, ds$ ($t\geq 0,$ $x\in [0,1]$), and using the growth order of $\|R(\cdot)\|$, it can be simply shown that the validity of condition
$$
\lim_{t\rightarrow +\infty}\Biggl\| \bigl\|q(t+x-\cdot)\bigr\|_{L^{p}[0,1]}+ \int^{t+x-1}_{0}\frac{\|q(s)\|}{1+s^{\gamma}}\, ds\Biggr\|_{L^{r_{1}(x)}[0,1]}=0
$$
yields that, for every $t\geq 0,$ the mapping $x\mapsto \int^{t+x}_{0}R(t+x-s) q(s)\, ds,$ $x\in [0,1]$ belongs to the space $ L^{r_{1}(x)}([0,1] : X) $ and 
\eqref{lep-primer} holds. Therefore, Proposition \ref{stepanov-almost-automorphy-p(x)} is applicable.
\end{example}

\end{document}